\documentclass[final,11pt,a4paper]{amsart}
\usepackage{amsmath,amssymb,amsthm,amsfonts}
\usepackage[usenames,dvipsnames]{xcolor}
\usepackage[all]{xy}
\SelectTips{cm}{}
\usepackage{ifthen}
\usepackage{graphicx}
\usepackage{psfrag}
\usepackage{enumitem}
\usepackage{booktabs}
\usepackage{hyperref}

\hypersetup{
    colorlinks,
    linkcolor={red!50!black},
    citecolor={blue!50!black},
    urlcolor={blue!80!black}
}

\newboolean{ebookreader} 
\setboolean{ebookreader}{false}
\ifthenelse{\boolean{ebookreader}}{
\usepackage[paperwidth=160mm,paperheight=120mm,left=5mm,right=5mm,top=5mm,bottom =5mm]{geometry}
\setlength{\pdfpagewidth}{\paperwidth}
\setlength{\pdfpageheight}{\paperheight}
\pagestyle{empty}
\hfuzz=5mm
\emergencystretch=15mm}{
}

\newcommand{\tX}{{\tilde X}}

\newcommand{\Sph}[1]{($S^{#1}$)}        
\newcommand{\CY}[1]{($\text{CY}_{\!#1}$)} 
\newcommand{\SOD}{{\rm (\ddag)}}
\newcommand{\SOT}{{\rm (\dag)}}

\newcommand{\fixedwidthtabular}{ \noindent \begin{tabular}{@{} p{0.08\textwidth} @{} p{0.92\textwidth} @{} } }

\newcommand{\assumption}[2]{ \medskip\noindent
                             \begin{tabular}{@{} p{0.077\textwidth} @{} p{0.92\textwidth} @{} }
                               #1 & #2 
                             \end{tabular}\medskip }

\newcommand{\apppart}[1]{\bigskip\noindent\emph{#1:}}

\newcommand{\bib}[6]{{\bibitem{#2} #3, {\emph{#4},} #5#6.}}
\newcommand{\arXiv}[1]{{\href{http://arxiv.org/abs/#1}{\texttt{arXiv:#1}}}}



\newcommand{\TTT}{\mathsf{T}\!}  
\newcommand{\TTl}{\TTT^{\:l}}
\newcommand{\TTr}{\TTT^{\:r}}
\newcommand{\SSS}{\mathsf{S}}    

\newcommand{\Db}{\mathcal{D}^b}  
\renewcommand{\AA}{\mathcal{A}}
\newcommand{\BB}{\mathcal{B}}
\newcommand{\CC}{\mathcal{C}}
\newcommand{\DD}{\mathcal{D}}
\newcommand{\EE}{\mathcal{E}}

\newcommand{\HH}{\mathcal{H}}

\newcommand{\MM}{\mathcal{M}}
\newcommand{\NN}{\mathcal{N}}
\newcommand{\OO}{\mathcal{O}}

\newcommand{\QQ}{\mathcal{Q}}

\newcommand{\UU}{\mathcal{U}}

%


\newcommand{\IP}{\mathbb{P}}

\newcommand{\kk}{\mathbf{k}}


\renewcommand{\mod}{\mathsf{mod}}

\newcommand{\proj}{\mathsf{proj}}
\DeclareMathOperator{\add}{\mathsf{add}}
\newcommand{\hh}{\text{-}}  

\newcommand{\sod}[1]{{\langle #1 \rangle}}

\newcommand{\bigsod}[1]{{\big\langle #1 \big\rangle}} 

\newcommand{\dual}{^\vee}
\newcommand{\orth}{^\perp}
\newcommand{\op}{^\text{op}}
\newcommand{\lorth}{{}^\perp}
\newcommand{\inv}{^{-1}}
\newcommand{\blank}{\:\cdot\:} 

\newcommand{\coloneqq}{\mathrel{\mathop:}=}
\newcommand{\eqqcolon}{=\mathrel{\mathop:}}  


\DeclareMathOperator{\Hom}{Hom}
\DeclareMathOperator{\End}{End}

\DeclareMathOperator{\id}{id}

\DeclareMathOperator{\Pic}{Pic}

\DeclareMathOperator{\Cone}{Cone}

\DeclareMathOperator{\supp}{supp}

\DeclareMathOperator{\chara}{char}

\newcommand{\xyinjar}{\ar@{^{(}->}}
\newcommand{\xysurjar}{\ar@{->>}}
\newcommand{\arrd}{ \ar@{-}[r] \ar@{=}[d] }

\newcommand{\isom}{ \text{{\hspace{0.48em}\raisebox{0.8ex}{${\scriptscriptstyle\sim}$}}}
                    \hspace{-0.65em}{\rightarrow}\hspace{0.3em}} 
\newcommand{\embed}{\hookrightarrow}
\newcommand{\xxrightarrow}[1]{\xrightarrow{\raisebox{-0.1ex}{\ensuremath{{\scriptscriptstyle #1}}}}} 
\makeatletter
\providecommand*{\xhookrightfill@}{%
  \arrowfill@{\lhook\joinrel\relbar}\relbar\rightarrow
}
\providecommand*{\xembed}[2][]{%
  \ext@arrow 3095\xhookrightfill@{#1}{#2}%
}
\makeatother
\newcommand{\xxembed}[1]{\xembed{\raisebox{-0.1ex}{\ensuremath{{\scriptscriptstyle #1}}}}}

\newcommand{\onto}{\twoheadrightarrow}

\newtheorem{theorem}{Theorem}[section]

\newtheorem*{theorem*}{Theorem}
\newtheorem{proposition}[theorem]{Proposition}
\newtheorem*{proposition*}{Proposition}
\newtheorem{lemma}[theorem]{Lemma}
\newtheorem*{lemma*}{Lemma}

\theoremstyle{definition}
\newtheorem{definition}[theorem]{Definition}
\newtheorem{remark}[theorem]{Remark}
\newtheorem{corollary}[theorem]{Corollary}
\newtheorem{example}[theorem]{Example}
\newtheorem*{remark*}{Remark}
\newtheorem*{example*}{Example}
\newtheorem*{examples*}{Examples}
\newtheorem*{convention}{Conventions}

\newcommand{\sqmat}[4]{{\big(\genfrac{.}{.}{0pt}{1}{#1}{#3} \,
                        \genfrac{.}{.}{0pt}{1}{#2}{#4}\big) }}

\begin{document}

\title[Spherical subcategories in algebraic geometry]{Spherical subcategories\\in algebraic geometry}
\author{Andreas Hochenegger}
\author{Martin Kalck}
\author{David Ploog}

\begin{abstract}
We study objects in triangulated
categories which have a two-dimensional graded endomorphism algebra. Given such an object, we
show that there is a unique maximal triangulated subcategory, in which the object is 
spherical. This general result is then applied to examples from algebraic geometry. 
\end{abstract}

\maketitle

\thispagestyle{empty}

{
\small
\vspace{-4ex}             
\setcounter{tocdepth}{1}  
\tableofcontents          
\vspace{-4ex}
}

\section{Introduction} 

\noindent
Spherical objects were introduced by Seidel and Thomas \cite{Seidel-Thomas} to construct auto\-equivalences of triangulated categories. By definition, the Serre functor shifts such an object (Calabi-Yau property) and its graded endomorphism algebra is two-dimen\-sional.
Under Kontsevich's Homological Mirror Symmetry conjecture \cite{Kontsevich}, these autoequivalences are mirror dual to generalised Dehn twists about Lagrangian spheres. 
Some typical examples are structure sheaves of $(-2)$-curves on surfaces, and line bundles on Calabi-Yau varieties.

However, the Calabi-Yau property is in general not preserved under important fully faithful functors like pull-backs along blow-ups. This forces us to study \emph{spherelike objects} (i.e.\ objects with two-dimensional graded endomorphism algebras) in general. 

Moreover, the dimension of the graded endomorphism algebra may be viewed as a measure for complexity. Exceptional objects (i.e.\ objects with one-dimensional graded endomorphism algebras) have been extensively studied for several decades, see e.g.\ the seminal work of the Rudakov seminar \cite{Rudakov}. Our work may be regarded as a first step towards a general theory for spherelike objects which come next in terms of complexity.

The following general theorem is our main result. It shows that given a spherelike object in a triangulated category, there exists a unique maximal triangulated subcategory in which the object becomes spherical.

\begin{theorem*}[Theorems~\ref{thm:spherelike-CY} \& \ref{thm:maximality}]
Let $\DD$ be a Hom-finite $\kk$-linear triangulated category, and let $F\in\DD$ be a $d$-spherelike object possessing a Serre dual. Then there is a triangulated subcategory $\DD_F\subset\DD$ such that $F\in\DD_F$ is a $d$-spherical object, and if $\UU\subset\DD$ is a triangulated subcategory with $F\in\UU$ $d$-spherical, then $\UU\subset\DD_F$. We call $\DD_{F}$ the \emph{spherical subcategory}Êof $F$.
\end{theorem*}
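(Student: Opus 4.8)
The plan is to build $\DD_F$ explicitly as the full subcategory of objects $A\in\DD$ for which the graded Hom-pairing between $F$ and $A$ behaves as it would if $F$ were spherical, and then to verify that this locus is triangulated and maximal. Concretely, let $G$ denote the Serre dual of $F$, so that $\Hom^\bullet(F,A)\cong\Hom^\bullet(A,G[d])^\vee$ for all $A$. The spherelike condition gives $\Hom^\bullet(F,F)=\kk\oplus\kk[-d]$, and there is a canonical ``spherelikeness'' morphism $w\colon F\to G$ (the one inducing the identity component under Serre duality); the object $F$ is spherical precisely when $w$ is an isomorphism. Following this, I would define $\DD_F$ to be the full subcategory of $A\in\DD$ such that the cone $C$ of $w$ satisfies $\Hom^\bullet(A,C)=\Hom^\bullet(C,A)=0$ — equivalently, $A$ lies in ${}^\perp C\cap C^\perp$. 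This is the natural candidate, since $w$ being an isomorphism is the obstruction to sphericity, and the Calabi-Yau property on a subcategory $\UU$ forces every object of $\UU$ to be orthogonal to the ``error term'' $C$.

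**The first step** is to check that $F\in\DD_F$, i.e.\ that $\Hom^\bullet(F,C)=\Hom^\bullet(C,F)=0$. This follows by applying $\Hom^\bullet(F,-)$ and $\Hom^\bullet(-,F)$ to the triangle $F\xrightarrow{w}G\to C\to F[1]$: using $\Hom^\bullet(F,F)\cong\Hom^\bullet(F,G)$ (again Serre duality together with the spherelike hypothesis forcing $\Hom^\bullet(F,G)$ to also be two-dimensional and the map induced by $w$ to be an isomorphism) one gets the vanishing. This is essentially Theorem~\ref{thm:spherelike-CY}, whose statement I am allowed to invoke: it already produces a triangulated subcategory in which $F$ becomes spherical, and the natural guess is that it is exactly $\DD_F$ as above, or at least contained in it.

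**The second step** — maximality — is the heart of the argument. Suppose $\UU\subset\DD$ is triangulated with $F\in\UU$ $d$-spherical. I need $\UU\subset\DD_F$, i.e.\ every $U\in\UU$ is left- and right-orthogonal to $C$. The key point is that the Serre functor of $\DD$ restricts (up to the $[d]$-shift) to a Serre functor on $\UU$ when tested against $F$: for $U\in\UU$, sphericity of $F$ inside $\UU$ gives $\Hom_\UU^\bullet(F,U)\cong\Hom_\UU^\bullet(U,F[d])^\vee$, and since $\UU\subset\DD$ is full this equals $\Hom_\DD^\bullet(F,U)\cong\Hom_\DD^\bullet(U,G[d])^\vee$; comparing, $\Hom^\bullet(U,F[d])\cong\Hom^\bullet(U,G[d])$ naturally in $U$, and one checks this isomorphism is induced by $w$. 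Feeding this into the triangle defining $C$ then forces $\Hom^\bullet(U,C)=0$, and a dual argument (using $\Hom^\bullet(F[-d],U)\cong\Hom^\bullet(G[-d],U)$, or equivalently the Serre dual of $F$ inside $\UU$) gives $\Hom^\bullet(C,U)=0$. Hence $U\in\DD_F$.

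**The expected obstacle** is precisely the naturality and compatibility bookkeeping in the second step: one must ensure that the abstract isomorphism $\Hom^\bullet(U,F[d])\cong\Hom^\bullet(U,G[d])$ coming from ``$F$ is spherical in $\UU$'' is genuinely the one induced by postcomposition with $w$, rather than some other isomorphism — otherwise the conclusion $\Hom^\bullet(U,C)=0$ does not follow. This requires pinning down $w$ intrinsically (as the component of the Serre duality pairing $\Hom^\bullet(F,G)\cong\Hom^\bullet(F,F)^\vee$ dual to $\id_F$) and tracking it through both Serre functors. I would also need the mild point that $C\neq 0$ in the non-spherical case so that $\DD_F\subsetneq\DD$ is a genuine restriction, and that $\DD_F$ is closed under shifts and cones — the latter is automatic since orthogonality complements ${}^\perp C$ and $C^\perp$ are always triangulated subcategories.
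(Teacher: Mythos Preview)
Your overall strategy is right, but the definition of $\DD_F$ is too small: you take the \emph{two-sided} orthogonal ${}\orth C\cap C\orth$, whereas only the \emph{left} orthogonal ${}\orth C$ works. The reason is that $F$ itself does not lie in $C\orth$ when $F$ is properly spherelike. Indeed, in the triangle $F\xrightarrow{w}G\to C\to F[1]$ the connecting morphism $C\to F[1]$ is nonzero whenever $C\neq0$ (see the paper's Remark after Lemma~\ref{lem:Hom(F,Q)=0}), so $\Hom^\bullet(C,F)\neq0$. Concretely, in the blow-up example (Example~\ref{ex:stdex}) one has $Q_F=\OO_R(-1)\oplus\OO_R(-1)[1]$ and the computation in Proposition~\ref{prop:sod-equality} shows $\Hom^\bullet(\OO_R(-1),F)\neq0$. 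Hence with your definition $F\notin\DD_F$, and the construction collapses. Your ``dual argument'' for $\Hom^\bullet(C,U)=0$ must therefore fail as well --- and it does, because you would need a Serre dual for $G=\SSS(F)[-d]$, not just for $F$, to run it.

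Once you drop the right-orthogonality condition and set $\DD_F\coloneqq{}\orth C$, your argument is essentially the paper's. The paper proves $F\in{}\orth C$ by showing that $w_*\colon\Hom^\bullet(F,F)\to\Hom^\bullet(F,G)$ is an isomorphism (your first step, correctly identified), and then proves the Calabi--Yau property inside ${}\orth C$ directly from the triangle. For maximality the paper does exactly what you sketch: it shows that for $U\in\UU$ the map $w_*\colon\Hom^\bullet(U,F)\to\Hom^\bullet(U,G)$ is an isomorphism, and the technical crux --- that the abstract Serre-duality isomorphism on $\UU$ really is the one induced by postcomposition with $w$ --- is handled via the Yoneda lemma, checking the composite natural transformation on $\id_F$. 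You flagged this as the expected obstacle, and that is precisely where the work lies.
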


We want to remark that the
spherical subcategory reflects important information of the ambient category, see e.g.\ the proposition and example below. 
Keller, Yang and Zhou \cite{Keller-Yang-Zhou} study the subcategory generated by a spherelike object and show that it does not depend on the ambient category --- in particular, it does not contain any geometric information. Their category is the minimal subcategory such that the object becomes spherical.

\smallskip

Our theory applies to many triangulated categories. For examples from the representation theory of finite-dimensional algebras, we refer to the forthcoming article \cite{HKP:RT}. In this article, applications to algebraic geometry are given in the final Section~\ref{sec:AG}. Here we mention Proposition~\ref{prop:structure-sheaf-spherelike} and Example~\ref{ex:structure-sheaf-ruled-surface}. 

\begin{proposition*}
Let $X$ be a Calabi-Yau variety and $\tX\to X$ be any succession of blow-ups in points. Then $\OO_\tX$ is a spherelike sheaf, and its spherical subcategory is $\Db(\tX)_{\OO_\tX}=\Db(X)$.
\end{proposition*}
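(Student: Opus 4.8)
The plan is to identify $\Db(X)$ as the spherical subcategory of $\OO_{\tX}$ in three stages: first check that $\OO_{\tX}$ is spherelike, then show $\OO_X$ is spherical in $\Db(X)$ and hence $\Db(X)\subseteq\Db(\tX)_{\OO_{\tX}}$ via the universal property, and finally prove the reverse inclusion, which I expect to be the crux.

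\smallskip

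First I would reduce to a single blow-up $\pi\colon\tX\to X$ in a point, since a succession of point blow-ups is handled by iterating the argument (the composite of the pushforwards is again a pushforward along a birational morphism with fibres that are projective spaces, and $R\pi_*\OO_{\tX}=\OO_X$ holds at each stage by the theorem on formal functions / rational singularities of smooth blow-ups). For the spherelike claim, I would compute $\Hom^\bullet_{\tX}(\OO_{\tX},\OO_{\tX})=H^\bullet(\tX,\OO_{\tX})$. Using $R\pi_*\OO_{\tX}=\OO_X$ and the Leray spectral sequence, this equals $H^\bullet(X,\OO_X)$, which is two-dimensional (concentrated in degrees $0$ and $\dim X$) precisely because $X$ is Calabi-Yau. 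So $\OO_{\tX}$ is $d$-spherelike with $d=\dim X=\dim\tX$. The Serre dual exists since $\Db(\tX)$ has a Serre functor (smooth projective variety), so the Main Theorem applies and produces $\Db(\tX)_{\OO_{\tX}}$.

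\smallskip

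For $\Db(X)\subseteq\Db(\tX)_{\OO_{\tX}}$: the derived pullback $L\pi^*\colon\Db(X)\hookrightarrow\Db(\tX)$ is fully faithful (standard for blow-ups, via $R\pi_*L\pi^*=\id$), and it sends $\OO_X$ to $\OO_{\tX}$. Since $X$ is Calabi-Yau, $\OO_X$ is a $d$-spherical object of $\Db(X)$: its graded endomorphism algebra is $H^\bullet(X,\OO_X)$, two-dimensional, and the Serre functor of $\Db(X)$ is $[\dim X]$, so the Calabi-Yau condition is immediate. Fully faithful functors preserve sphericity (the graded Hom spaces and the Serre-duality pairing are intact), so $\OO_{\tX}$ is spherical in the triangulated subcategory $L\pi^*\Db(X)\cong\Db(X)$. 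By the maximality clause of the Main Theorem, $L\pi^*\Db(X)\subseteq\Db(\tX)_{\OO_{\tX}}$, i.e.\ $\Db(X)\subseteq\Db(\tX)_{\OO_{\tX}}$.

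\smallskip

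The reverse inclusion is the main obstacle. Here I would use the semiorthogonal decomposition $\Db(\tX)=\langle L\pi^*\Db(X),\,\OO_E(-1),\dots,\OO_E(-c+1)\rangle$ coming from the blow-up (with $E$ the exceptional divisor, $c=\codim$ of the blown-up point $=\dim X$), together with the characterisation of $\Db(\tX)_{\OO_{\tX}}$ from the proof of the Main Theorem — typically $\Db(\tX)_{\OO_{\tX}}$ is cut out as a certain orthogonal or kernel of a natural transformation built from the "non-spherical part" of the Serre dual of $\OO_{\tX}$. Concretely, since $R\pi_*\OO_{\tX}=\OO_X$ but $L\pi^*R\pi_*\OO_{\tX}\ne\OO_{\tX}$ in general, the obstruction to $\OO_{\tX}$ being spherical in all of $\Db(\tX)$ is governed by the exceptional summands, and one must show that any triangulated subcategory in which $\OO_{\tX}$ is spherical must avoid the exceptional objects $\OO_E(-j)$ and therefore lands inside $L\pi^*\Db(X)$. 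I would verify this by computing $\Hom^\bullet_{\tX}(\OO_{\tX},\OO_E(-j))$ and its Serre dual and showing the asymmetry (the "spherelike but not spherical" defect, measured by the object $\widetilde F$ or $Q_F$ constructed in the proof of Theorem~\ref{thm:maximality}) is supported exactly on $E$; hence the maximal subcategory making $\OO_{\tX}$ spherical is the orthogonal to that defect, which is precisely $L\pi^*\Db(X)$.
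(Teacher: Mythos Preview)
Your first two steps are correct and match the paper: spherelikeness via $R\pi_*\OO_{\tX}=\OO_X$, and the inclusion $\pi^*\Db(X)\subseteq\DD_{\OO_{\tX}}$ via the maximality theorem. One minor slip: in the paper's convention the semi-orthogonal decomposition reads $\Db(\tX)=\sod{\OO_E(-d+1),\ldots,\OO_E(-1),\pi^*\Db(X)}$, with $\pi^*\Db(X)$ on the right; your ordering is reversed.

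The third step has a genuine gap. Your inference ``$\DD_{\OO_{\tX}}$ avoids each $\OO_E(-j)$, therefore $\DD_{\OO_{\tX}}\subseteq\pi^*\Db(X)$'' is not valid as stated: a triangulated subcategory can fail to contain the generators $\OO_E(-j)$ of $\EE=\sod{\OO_E(-d+1),\ldots,\OO_E(-1)}$ and still not lie inside ${}^\perp\EE=\pi^*\Db(X)$. What is actually needed is that for every $A\in\DD_{\OO_{\tX}}$, the $\EE$-component $A_\EE$ in the decomposition triangle $A_\CC\to A\to A_\EE$ vanishes. The paper obtains this from a structural result (Theorem~\ref{thm:projection_functor}): whenever $F$ arises as the image of a spherical object under the inclusion of a right-admissible subcategory $\CC$, one has $\DD_F=\sod{\CC^\perp\cap{}^\perp F,\,\CC}$. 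The reverse inclusion then reduces to checking $\EE\cap{}^\perp\OO_{\tX}=0$, which is done by the computation $\Hom^\bullet(\OO_E(-j),\OO_{\tX})\neq0$ for $1\le j\le d-1$ (Proposition~\ref{prop:sod-equality} and its corollary). Note that your proposed computation $\Hom^\bullet(\OO_{\tX},\OO_E(-j))$ is the wrong direction---that space \emph{is} zero by the semi-orthogonality, which is exactly why one must look at $\Hom^\bullet(\OO_E(-j),\OO_{\tX})$ instead.

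Your alternative route via $Q_F$ can also be made to work: here $\omega_{\tX}=\OO_{\tX}((d-1)E)$, so $Q_{\OO_{\tX}}=\OO_{(d-1)E}((d-1)E)$ directly from the asphericality triangle. But ``$Q_F$ is supported on $E$'' only gives ${}^\perp Q_F\supseteq\pi^*\Db(X)$, which you already have; for equality you must show $\EE\cap{}^\perp Q_F=0$, and this is the same intersection problem as above (indeed Theorem~\ref{thm:projection_functor} shows $\EE\cap{}^\perp Q_F=\EE\cap{}^\perp\OO_{\tX}$).
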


It is well-known that the derived category of a blow-up has an explicit description as a semi-orthogonal decomposition; see \cite[\S11.2]{Huybrechts}. However, our result says that the spherical subcategory of the structure sheaf recovers the derived category of the original variety in one fell swoop. One thus might consider going from a category to a spherical subcategory as a kind of derived birational transformation.

\begin{example*}
Let $Y$ be a ruled surface over an elliptic curve. Then the structure sheaf is spherelike and the spherical subcategory $\Db(Y)_{\OO_Y}$ depends in an explicit manner on the choice of $Y=\IP(V)$ where $V$ is a rank two bundle on the elliptic curve; see Example~\ref{ex:structure-sheaf-ruled-surface} for details.
\end{example*}

\begin{convention}
All subcategories are assumed to be full.
The shift (or translation, or suspension) functor of triangulated categories is denoted by $[1]$. All triangles in triangulated categories are meant to be distinguished. Also, we will generally denote triangles abusively by $A\to B\to C$, hiding the degree increasing morphism $C\to A[1]$.

All functors between triangulated categories are meant to be exact. Derived functors are denoted with the same symbol as the (non-exact) functors between abelian categories. For example, if $f\colon X\to Y$ is a proper map of varieties, $f_*\colon\Db(X)\to\Db(Y)$ denotes the derived push-forward functor.
\end{convention}

\section{Preliminaries}

\noindent
In this section, we collect some terminology and basic facts. All of this is standard apart from the notion of a piecewise invertible functor.
Most of the material can be found in \cite{Huybrechts} unless otherwise stated.
Readers exclusively interested in spherical subcategories can fast forward to Section~\ref{sec:spherical_subcategories}.

Fix an algebraically closed field $\kk$. We write $(\blank)^* = \Hom_\kk(\blank,\kk)$ for dualisation over the ground field. By $\Hom^\bullet(A,B)$ we denote the complex $\bigoplus_i\Hom(A,B[i])[-i]$ of $\kk$-vector spaces with the zero differential. A $\kk$-linear triangulated category $\DD$ is called \emph{Hom-finite} if $\dim_\kk\Hom(A,B)<\infty$ for all objects $A,B$. It is called \emph{$\Hom^\bullet$-finite} if $\dim_\kk\Hom^\bullet(A,B)<\infty$ for all objects $A,B$ (\cite[Def.~3.12]{Orlov-proper} calls this \emph{proper}). The category $\DD$ is called \emph{idempotent complete} if any projector splits, i.e.\ for $p\in\Hom(A,A)$ with $p^2=p$ there is $A \cong A_1\oplus A_2$ such that $p$ is $A\to A_1\to A$. It is well-known that Hom-finite and idempotent complete categories are \emph{Krull-Schmidt}, i.e.\ every object has direct sum decomposition into indecomposable objects, unique up to order and isomorphism. For the convenience of the reader, we give the short argument below.

\begin{lemma}
Let $\DD$ be a $\kk$-linear Hom-finite, idempotent complete additive category. Then $\DD$ is Krull-Schmidt.
\end{lemma}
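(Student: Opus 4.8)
The plan is to reduce everything to two classical facts about local rings and idempotents, exploiting Hom-finiteness to get local endomorphism rings and idempotent completeness to split the relevant projectors. First I would establish the \emph{existence} of a decomposition into indecomposables. Given an object $A$, if it is not already indecomposable, write $A\cong A_1\oplus A_2$ nontrivially; since $\End(A_i)$ is a $\kk$-subquotient-ish summand of $\End(A)$, which is finite-dimensional, the lengths $\dim_\kk\End(A_i)$ strictly decrease (one checks $\dim\End(A_1)+\dim\End(A_2)\le\dim\End(A)$, with strict inequality because the off-diagonal Hom-spaces are nonzero only if... — more simply, $\dim\End(A_i)<\dim\End(A)$ since $\End(A_i)$ embeds as a non-unital subring and $A_{3-i}\ne 0$ forces $\End(A)$ strictly larger). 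Hence the process terminates and every object is a finite direct sum of indecomposables.

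The next step is to show that an indecomposable object $A$ has \emph{local} endomorphism ring. Here I would use that $R\coloneqq\End(A)$ is a finite-dimensional $\kk$-algebra, hence $R/\operatorname{rad}R$ is semisimple and $R$ has no nontrivial idempotents iff it is local. If $R$ had a nontrivial idempotent $p$ (i.e.\ $p\ne 0,1$ with $p^2=p$), then idempotent completeness would split $A\cong A_1\oplus A_2$ with both $A_i\ne 0$, contradicting indecomposability. So $R$ has only the idempotents $0$ and $1$; for a finite-dimensional algebra this is equivalent to being local (lift idempotents from $R/\operatorname{rad}R$: a nontrivial decomposition of $1$ in the semisimple quotient would lift to one in $R$). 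Thus $\End(A)$ is local for every indecomposable $A$.

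Finally, I would invoke the general principle that in an additive category where indecomposables have local endomorphism rings, decompositions into indecomposables are unique up to reordering and isomorphism — this is the Krull–Remak–Schmidt–Azumaya theorem. The proof is the standard exchange argument: given $A_1\oplus\dots\oplus A_m\cong B_1\oplus\dots\oplus B_n$ with all $A_i,B_j$ indecomposable, compose the isomorphism with the inclusion of $A_1$ and projections to the $B_j$; locality of $\End(A_1)$ forces one of the composites $A_1\to B_j\to A_1$ to be an isomorphism (a finite sum of non-units in a local ring is a non-unit, but the sum here is the identity), whence $A_1$ is a summand of $B_j$ and, both being indecomposable, $A_1\cong B_j$; one then cancels $A_1$ and $B_j$ and induces on $m$.

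The only mild obstacle is the bookkeeping in the last step — making the exchange/cancellation argument precise (in particular, that after splitting off the isomorphic summand the complementary summands are again isomorphic). This is entirely routine given that $\End(A_i)$ is local, so I would state it as an application of Azumaya's theorem and relegate the details to the standard references; Hom-finiteness and idempotent completeness have already done the real work by guaranteeing the two hypotheses (finite-length decompositions exist, and indecomposables have local endomorphism rings).
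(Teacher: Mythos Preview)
Your argument is correct, but the paper takes a different and shorter route. Rather than establishing existence, locality of $\End$ for indecomposables, and then running the Azumaya exchange argument, the paper reduces everything in one step to a known case: for a fixed object $A$ it observes that $\Hom(A,\blank)$ gives an equivalence $\add(A)\isom\proj\End(A)$ (idempotent completeness is precisely what makes this essentially surjective), and since $\End(A)$ is a finite-dimensional $\kk$-algebra, Krull--Schmidt for $\proj\End(A)$ is a classical fact about modules over artinian rings (cited from Auslander--Reiten--Smal{\o}). The decomposition of $A$ is then read off through the equivalence.

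Your approach is more self-contained and transparent about \emph{why} the result holds---you make explicit the two ingredients (termination via $\dim\End$, locality of endomorphism rings of indecomposables) that are hidden inside the black-box citation. The paper's approach is slicker and shorter, trading detail for a single clean categorical reduction. Both are standard; if you want to match the paper's style you can collapse your three steps into the one-line $\add(A)\simeq\proj\End(A)$ observation.
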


\begin{proof} Let $A\in\DD$. We denote by $\add(A)$ the smallest additive subcategory of $\DD$ containing all direct summands of $A$.
Put $\proj\End(A)$ for the category of finitely generated projective right modules over the ring $\End(A)$. The functor
 $\Hom(A,\blank)\colon\add(A)\to\proj\End(A)$
is an equivalence of additive categories since $\DD$ is idempotent complete.

Moreover, $\End(A)$ is in fact a finite-dimensional $\kk$-algebra. The Krull-Schmidt property holds for the category of finitely generated modules over an artinian ring (see \cite[\S{}II.2]{Auslander-Reiten-Smalo}), hence particularly holds for $\proj\End(A)$, and then also for $\add(A)$. Therefore $A$ decomposes essentially uniquely into a direct sum of indecomposable objects.
\end{proof}

\medskip

\subsubsection*{Serre duality}
Let $A\in\DD$ be an object. We say that $A$ \emph{has a Serre dual} if the cohomological functor $\Hom_\DD(A,\blank)^*$ is representable. If this is the case, the representing object is unique and will be denoted by $\SSS A$. By definition, we then get isomorphisms
 $\sigma_B\colon \Hom(B,\SSS A)\isom\Hom(A,B)^*$, functorial in $B\in\DD$.
Note that there is a canonical map $\Hom^\bullet(A,A)^*\to\kk$ and we claim that Serre duality implies that the pairing
\[ \Hom^\bullet(A,B) \otimes \Hom^\bullet(B,\SSS A) \to \Hom^\bullet(A,\SSS A) \isom \Hom^\bullet(A,A)^* \to \kk \]
is non-degenerate for all $B\in\DD$. This claim follows from 
 $\sigma_A(gf)(\id_A) = \sigma_B(g)(f)$ for all $f\colon A\to B$ and $g\colon B\to\SSS A$. These relations are formal consequences of the commutative diagrams
\[ \xymatrix@R=3ex{
  \Hom(A,\SSS A) \ar[r]^{\sigma_A}              & \Hom(A,A)^* \\
  \Hom(B,\SSS A) \ar[r]^{\sigma_B} \ar[u] & \Hom(A,B)^* \ar[u]
} \]
expressing functoriality of the $\sigma$ maps (the vertical maps are induced by $f$).

We say that the category $\DD$ \emph{has a right Serre functor} if every object $A$ has a Serre dual $\SSS A$ and write $\SSS\colon\DD\to\DD$ for the induced functor (by the functorial isomorphisms $\Hom(A,\blank)^*\isom \Hom(\blank,\SSS A)$ for any $A \in \DD$, we get natural transformations $\Hom(\blank,\SSS A)\to\Hom(\blank,\SSS B)$ for $f\colon A\to B$. By the Yoneda lemma this gives a unique $\SSS(f)\colon \SSS A \to \SSS B$ in turn; see \cite[Lem.~I.1.3(2)]{Reiten-vandenBergh}).
Here we follow \cite[\S{}I]{Reiten-vandenBergh}, where it is shown that a right Serre functor is always fully faithful but in general not essentially surjective.

We say that $\DD$ \emph{has a Serre functor} if it has a right Serre functor which is an equivalence. We will denote Serre functors by $\SSS$, unless $\SSS_\DD$ is needed for clarity. Serre functors are unique up to unique isomorphism, hence commute with equivalences. To be precise, if $\CC$ and $\DD$ are triangulated categories with Serre functors $\SSS_\CC$ and $\SSS_\DD$, and if $\varphi\colon\DD\isom\CC$ is an equivalence, then $\varphi\inv\SSS_\CC\varphi$ is a Serre functor for $\DD$, and hence has to be isomorphic to $\SSS_\DD$.

An object $A\in\DD$ is called a \emph{$d$-Calabi-Yau object}, for an integer $d$, if $A[d]$ is a Serre dual for $A$, so there are isomorphisms $\Hom^\bullet(A,B)\cong\Hom^\bullet(B,A[d])^*$, natural in $B\in\DD$.

The category $\DD$ is called a \emph{$d$-Calabi-Yau category}, if the shift $[d]$ is a Serre functor. We remark that a triangulated category might be $d$-Calabi-Yau for several numbers $d$ at once if a multiple of the shift functor is the identity. Also, it is not enough to demand that all objects are $d$-Calabi-Yau; see \cite[Ex.~9(1)]{Dugas} for a specific instance.

\subsubsection*{Spanning and generation}
Let $\Omega\subseteq\DD$ be a subset (or subclass) of objects. The left and right orthogonal subcategories of $\Omega$ are the full subcategories 
\begin{align*}
\Omega\orth   &= \{ A\in\DD \mid \Hom^\bullet(\omega,A) = 0 ~\forall \omega\in\Omega \} , \\
\lorth\Omega &= \{ A\in\DD \mid \Hom^\bullet(A,\omega) = 0 ~\forall \omega\in\Omega \} .
\end{align*}
Both of these are triangulated. We say that $\Omega$ \emph{spans} $\DD$ (or is a \emph{spanning class}) if $\Omega\orth=0$ and $\lorth\Omega=0$.

We denote by $\sod{\Omega}$ the smallest triangulated subcategory of $\DD$ closed under direct summands which contains $\Omega$; this is sometimes denoted by $\operatorname{thick}(\Omega)$.
We say that $\Omega$ \emph{classically} (or \emph{split}) \emph{generates} $\DD$ if $\sod{\Omega}=\DD$. We will omit the attribute ``classical'' in the subsequent text. A generating class is always spanning, but in general not vice versa.

\subsubsection*{Semi-orthogonal decompositions}
Essentially, the concepts here can be found in \cite{Bondal-Kapranov-SOD}, except the notion of weak semi-orthogonal decompositions which seems to be defined for the first time in \cite{Orlov-LG}. A triangulated subcategory $\NN \embed \DD$ is called \emph{left (or right) admissible} if the inclusion admits a left (or right) adjoint. To rephrase, $\NN$ is right admissible if for any $A \in \DD$ there is a unique functorial triangle $A_\NN \to A \to A_\perp$ with $A_\NN \in \NN$ and $A_\perp \in \NN\orth$.
In fact, $\DD\to\NN$, $A\mapsto A_\NN$ and $\DD\to\NN\orth$, $A\mapsto A_\perp$ are triangle functors by \cite[Prop.~1.3.3]{BBD}.
We call $\NN$ \emph{admissible} if $\NN$ is left and right admissible.

Actually, a pair $(\MM,\NN)$ of  triangulated subcategories of $\DD$, such that $\NN$ is right admissible and $\MM = \NN\orth$, is called a \emph{weak semi-orthogonal decomposition} of $\DD$.
Note that $\MM$ is automatically left admissible. If both $\MM$ and $\NN$ are additionally admissible then we call the pair a \emph{semi-orthogonal decomposition}. In both cases, we write $\DD = \sod{\MM,\NN}$. For readers more familiar with t-structures, we mention that a weak semi-orthogonal decomposition $\sod{\MM,\NN}$ is the same thing as a t-structure $(\NN,\MM)$ for which both subcategories $\MM$ and $\NN$ are triangulated.
Sometimes we will only write $\sod{\MM,\NN}$, where implicitly mean the category generated by the union of $\MM$ and $\NN$.

The definition can be extended inductively: a sequence $(\NN_1, \ldots, \NN_k)$  is a (weak) semi-orthogonal decomposition if $\sod{\sod{\NN_1, \ldots, N_{k-1}},\NN_k}$ is.

A special case are exceptional sequences. An object $E \in \DD$ is \mbox{\emph{exceptional}} if $\Hom^\bullet(E,E) = \kk$. A sequence of objects $(E_1, \ldots, E_k)$ is called \emph{exceptional} if all $E_i$ are exceptional and $\Hom^\bullet(E_j,E_i) = 0$ for $j>i$. Subcategories generated by exceptional sequence are admissible. In particular, if the exceptional sequence is \emph{full}, i.e.\ generates the whole category, then $\DD = \sod{E_1,\ldots,E_k}$ is a semi-orthogonal decomposition; by common abuse of notation we write $E_i$ to mean the triangulated category generated by the exceptional object $E_i$.

\subsubsection*{Adjoints}
Let $\varphi\colon\CC\to\DD$ be an exact functor between triangulated categories. If $\varphi$ has a right adjoint, it will be denoted by $\varphi^r\colon\DD\to\CC$. It is a simple fact that $\varphi^r$ is again exact; \cite[Prop.~1.41]{Huybrechts}. The same holds for a left adjoint $\varphi^l$.
%
%
The next lemma collects two well known and simple properties of adjoints:

\begin{lemma} \label{lem:adjoint_properties}
Let $\varphi\colon\CC\to\DD$ be an exact functor between triangulated categories with a right adjoint $\varphi^r$. Assume that $\DD$ has a Serre functor $\SSS_\DD$.

\fixedwidthtabular
(1) & If $\varphi$ is fully faithful, i.e.\ $\CC$ right admissible in $\DD$, then
      $\SSS_\CC = \varphi^r\SSS_D\varphi$ is a right Serre functor for $\CC$.
\\
(2) & If $\CC$ has a Serre functor, then there is a left adjoint 
      $\varphi^l=\SSS_\CC\inv \varphi^r \SSS_\DD$. \\
\end{tabular}
\end{lemma}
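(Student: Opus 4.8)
The plan is to prove each of the two statements by a short diagram chase using the defining natural isomorphisms of the (right) Serre functors on $\CC$ and $\DD$, together with the adjunction isomorphism for $\varphi\dashv\varphi^r$.

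For part (1), I would start from the fact that, since $\varphi$ is fully faithful, $\CC$ is right admissible in $\DD$ and there is an adjunction bijection $\Hom_\DD(\varphi A,\varphi B)\cong\Hom_\CC(A,B)$ for $A,B\in\CC$. The candidate right Serre functor is $\SSS_\CC\coloneqq\varphi^r\SSS_\DD\varphi$, so for $A,B\in\CC$ I would assemble the chain of functorial isomorphisms
\[
\Hom_\CC(B,\varphi^r\SSS_\DD\varphi A)\cong\Hom_\DD(\varphi B,\SSS_\DD\varphi A)\cong\Hom_\DD(\varphi A,\varphi B)^*\cong\Hom_\CC(A,B)^*,
\]
where the first isomorphism is the $(\varphi,\varphi^r)$-adjunction, the second is Serre duality in $\DD$, and the third is full faithfulness of $\varphi$. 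Naturality in $B$ (and in $A$) is inherited from the three ingredients, so $\Hom_\CC(A,\blank)^*$ is represented by $\SSS_\CC A$; since this works for every $A$, we obtain a right Serre functor $\SSS_\CC$ on $\CC$. I would remark that exactness of $\varphi^r$ (already noted in the excerpt, \cite[Proposition~1.41]{Huybrechts}) guarantees $\SSS_\CC$ is an exact functor, as required.

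For part (2), assuming $\CC$ additionally has a (genuine, invertible) Serre functor $\SSS_\CC$, I would verify directly that $\psi\coloneqq\SSS_\CC\inv\varphi^r\SSS_\DD$ is left adjoint to $\varphi$. Concretely, for $A\in\DD$ and $C\in\CC$ one computes
\[
\Hom_\CC(\psi A,C)=\Hom_\CC(\SSS_\CC\inv\varphi^r\SSS_\DD A,\,C)\cong\Hom_\CC(\varphi^r\SSS_\DD A,\,\SSS_\CC C)^{**}\!,
\]
but it is cleaner to apply Serre duality in $\CC$ in the form $\Hom_\CC(X,\SSS_\CC Y)\cong\Hom_\CC(Y,X)^*$: taking $X=\psi A$ gives $\Hom_\CC(\psi A,C)\cong\Hom_\CC(\SSS_\CC C,\SSS_\CC\psi A)=\Hom_\CC(\SSS_\CC C,\varphi^r\SSS_\DD A)\cong\Hom_\DD(\varphi\SSS_\CC C,\SSS_\DD A)$, and then Serre duality in $\DD$ turns this into $\Hom_\DD(A,\varphi\SSS_\CC C)^*$, while another application of Serre duality in $\CC$ turns the original target $\Hom_\DD(A,\varphi C)$ through the same manipulations. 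The bookkeeping is purely formal: each step is one of the two Serre isomorphisms or the $\varphi\dashv\varphi^r$ adjunction, all natural, so I would organise it as a single commuting rectangle of natural isomorphisms witnessing $\Hom_\CC(\psi A,C)\cong\Hom_\DD(A,\varphi C)$ functorially in both variables.

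I do not expect a genuine obstacle here — both claims are standard and the content is entirely in chaining natural isomorphisms — but the one point that needs care is keeping the variances straight: Serre duality relates $\Hom(X,\SSS Y)$ with $\Hom(Y,X)^*$, so each use swaps the roles of source and target and introduces a dualisation, and one must check these dualisations cancel correctly (an even number of them) so that the final identification is between $\Hom$-spaces and not their duals. A secondary subtlety worth a sentence is that in part (1) one only gets a \emph{right} Serre functor on $\CC$, not necessarily an equivalence, which is why part (2) must separately hypothesise that $\CC$ has an honest Serre functor in order to invert $\SSS_\CC$; I would make sure the statement and proof distinguish these two situations explicitly.
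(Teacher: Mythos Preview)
Your proof of (1) is exactly the same chain of isomorphisms the paper gives, and for (2) the paper simply declares it ``straightforward'' --- your formal chase with Serre duality on both sides and the $(\varphi,\varphi^r)$-adjunction is precisely what is intended, so the approaches coincide. Your part (2) wanders slightly (the line through $\Hom_\DD(A,\varphi\SSS_\CC C)^*$ needs one more dualisation to close up), but the cleanest version is the three-step chain $\Hom_\CC(\psi A,C)\cong\Hom_\CC(C,\varphi^r\SSS_\DD A)^*\cong\Hom_\DD(\varphi C,\SSS_\DD A)^*\cong\Hom_\DD(A,\varphi C)$, which is what the paper has in mind.
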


\begin{proof}
(2) is straightforward. For (1), compute for any objects $A,B \in \CC$
\begin{align*}
 \Hom_\CC(A,\SSS_\CC B) &= \Hom_\CC(A,\varphi^r\SSS_\DD\varphi B) 
                         = \Hom_\DD(\varphi A,\SSS_\DD\varphi B) \\
                        &= \Hom_\DD(\varphi B,\varphi A)^* 
                         = \Hom_\CC(B,A)^* . \qedhere 
\end{align*}
\end{proof}

\subsubsection*{Functor properties}
We list some properties a functor might enjoy and which equivalences always have. All notions are standard apart from the last one.

\begin{definition} \label{defn:functorproperties}
Let $\varphi\colon\DD\to\DD'$ be an exact functor between $\kk$-linear, triangulated categories. Then $\varphi$ is said to be

\renewcommand{\descriptionlabel}[1]{%
  \hspace\labelsep \upshape #1%
}
\begin{description}
\item[\emph{fully faithful}] if the maps $\Hom_\DD(D_1,D_2)\isom\Hom_{\DD'}(\varphi(D_1),\varphi(D_2))$ induced by $\varphi$ are isomorphisms for all $D_1,D_2\in\DD$.
\item[\emph{conservative}] if $f$ is a morphism in $\DD$ such that $\varphi(f)$ an isomorphism, then $f$ is an isomorphism itself.
\item[\emph{essentially surjective}] if for any object $D'\in\DD'$, there is an object $D\in\DD$ such that $\varphi(D)\cong D'$.
\item[an \emph{equivalence}] (or \emph{invertible}) if $\varphi$ is fully faithful and essentially surjective.
\item[\emph{piecewise invertible}] if there are weak semi-orthogonal decompositions $\DD = \sod{\DD_1,\ldots,\DD_n}$, $\DD' = \sod{\DD'_1,\ldots,\DD'_n}$ such that $\varphi(\DD_i)\subseteq\DD'_i$ and $\varphi|_{\DD_i}$ induces an equivalence $\DD_i\isom\DD'_i$.
\end{description}
\end{definition}

\begin{lemma} \label{lem:piecewise-invertible-is-conservative}
A piecewise invertible functor $\varphi\colon\DD\to\DD'$ is conservative.
\end{lemma}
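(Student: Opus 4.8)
The plan is to argue by induction on the length $n$ of the weak semi-orthogonal decompositions witnessing piecewise invertibility, peeling off one semi-orthogonal piece at a time. Suppose $f\colon A\to B$ is a morphism in $\DD$ with $\varphi(f)$ an isomorphism, and write $\DD=\sod{\DD_1,\ldots,\DD_n}$, $\DD'=\sod{\DD'_1,\ldots,\DD'_n}$ for the decompositions with $\varphi(\DD_i)\subseteq\DD'_i$ and $\varphi|_{\DD_i}\colon\DD_i\isom\DD'_i$. The base case $n=1$ is exactly the statement that an equivalence (or even a fully faithful functor) is conservative, which is immediate: if $\varphi(f)$ is invertible with inverse $g'$, full faithfulness produces $g\colon B\to A$ with $\varphi(g)=g'$, and then $\varphi(gf)=\id$, $\varphi(fg)=\id$ force $gf=\id_A$, $fg=\id_B$ by faithfulness.

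For the inductive step I would use the functorial triangles coming from the last piece of each decomposition. Writing $\NN=\DD_n$, which is right admissible in $\DD$, and $\MM=\NN\orth=\sod{\DD_1,\ldots,\DD_{n-1}}$, every object $A\in\DD$ sits in a functorial triangle $A_\NN\to A\to A_\MM$ with $A_\NN\in\NN$, $A_\MM\in\MM$; and these assignments are triangle functors by \cite[Proposition~1.3.3]{BBD}. The morphism $f\colon A\to B$ induces a morphism of triangles $(A_\NN\to A\to A_\MM)\to(B_\NN\to B\to B_\MM)$. Now apply $\varphi$. The key point is that $\varphi$ respects these decompositions: $\varphi(A_\NN)\in\DD'_n$ and $\varphi(A_\MM)\in\sod{\DD'_1,\ldots,\DD'_{n-1}}={}\DD'_n{}\orth$, so the functorial triangle for $\varphi(A)$ in $\DD'$ with respect to its last piece is (isomorphic to) $\varphi(A_\NN)\to\varphi(A)\to\varphi(A_\MM)$, and similarly for $\varphi(B)$. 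Since $\varphi(f)$ is an isomorphism and the truncation functors are functorial, $\varphi(f_\NN)\colon\varphi(A_\NN)\to\varphi(B_\NN)$ and $\varphi(f_\MM)\colon\varphi(A_\MM)\to\varphi(B_\MM)$ are both isomorphisms. By the base case applied to the equivalence $\varphi|_{\DD_n}$, the morphism $f_\NN\colon A_\NN\to B_\NN$ is an isomorphism in $\DD_n$, hence in $\DD$; by the inductive hypothesis applied to the piecewise invertible functor $\varphi|_\MM\colon\MM\to\sod{\DD'_1,\ldots,\DD'_{n-1}}$ (which is piecewise invertible via the shorter decompositions), $f_\MM\colon A_\MM\to B_\MM$ is an isomorphism. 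Then $f$ is an isomorphism by the five lemma for triangulated categories.

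The step I expect to be the main obstacle — or at least the one requiring care — is the claim that $\varphi$ transports the canonical truncation triangle of $\DD$ to the canonical truncation triangle of $\DD'$. This rests on the uniqueness of the functorial triangle $X'_{\DD'_n}\to X'\to X'_{\perp}$ characterising a weak semi-orthogonal decomposition, together with the hypotheses $\varphi(\DD_n)\subseteq\DD'_n$ and $\varphi(\MM)=\varphi(\sod{\DD_1,\ldots,\DD_{n-1}})\subseteq\sod{\DD'_1,\ldots,\DD'_{n-1}}={}\DD'_n\orth$; the latter inclusion is what lets us identify $\varphi(A_\MM)$ as lying in the correct orthogonal, so that $\varphi(A_\NN)\to\varphi(A)\to\varphi(A_\MM)$ is forced to be the truncation triangle of $\varphi(A)$ up to canonical isomorphism. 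Once that identification is in place, everything else is the standard manipulation of morphisms of triangles, and the five lemma closes the induction.
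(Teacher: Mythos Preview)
Your argument is correct and follows the same overall shape as the paper's: induction on the number of pieces, reducing to two factors and using the truncation functors associated with the semi-orthogonal decomposition. The execution of the key step differs slightly. You argue that $\varphi$ carries the truncation triangle in $\DD$ to the truncation triangle in $\DD'$ (by uniqueness), so that functoriality of truncation in $\DD'$ applied to the isomorphism $\varphi(f)$ forces $\varphi(f_\NN)$ and $\varphi(f_\MM)$ to be isomorphisms. The paper instead passes to the cone $C=\Cone(f)$, notes $\varphi(C)=0$, and then observes from the triangle $\varphi(C_2)\to\varphi(C)\to\varphi(C_1)$ that $\varphi(C_1)\cong\varphi(C_2)[1]$ lies in $\DD'_1\cap\DD'_2=0$; hence $C_1=C_2=0$ since $\varphi$ is an equivalence on each piece. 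The cone argument is marginally slicker in that it avoids having to match up $\varphi(f_\NN)$ with the functorial truncation of $\varphi(f)$ in $\DD'$; your approach is equally valid once one notes that morphisms of truncation triangles lifting a given map are unique.
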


\begin{proof}
We choose weak semi-orthogonal decompositions $\DD = \sod{\DD_1,\DD_2}$ and $\DD' = \sod{\DD'_1,\DD'_2}$ such that $\varphi$ induces equivalences of each component --- more than two components can be dealt with by induction. 

Given a morphism $f\colon A\to B$ in $\DD$ such that $\varphi(f)$ is an isomorphism, we consider the triangle $A\to B\to C$ where $C=\Cone(f)$. Through the triangle functors $\DD\to\DD_i$ associated with the semi-orthogonal decomposition, we obtain a commutative diagram
\[ \xymatrix@R=3ex{
A_2 \ar[d]^{f_2} \ar[r] & A \ar[d]^f \ar[r] & A_1 \ar[d]^{f_1} \\
B_2 \ar[d]       \ar[r] & B \ar[d]   \ar[r] & B_1 \ar[d]      \\
C_2              \ar[r] & C          \ar[r] & C_1
} \]
with $A_i,B_i\in\DD_i$ and where the rows and columns are exact. Applying $\varphi$ to the whole diagram, we find $\varphi(C)=0$ since $\varphi(f)$ is an isomorphism by assumption. Thus $\varphi(C_1)\cong\varphi(C_2)[1]$ lives in $\DD'_1\cap\DD'_2=0$, and we get $\varphi(C_1)=0$, $\varphi(C_2)=0$. As $\varphi$ induces equivalences $\DD_1\isom\DD'_1$ and $\DD_2\isom\DD'_2$, we deduce $C_1=0$ and $C_2=0$. Hence $f_1$ and $f_2$ are isomorphisms and then $f$ is an isomorphism, as well.
\end{proof}

Note that the composition of piecewise invertible functors is not necessarily piecewise invertible again, whereas the other four properties of the definition are closed under composition. Let us recall standard criteria for fully faithfulness and for equivalence:

\begin{proposition}[{\cite[Prop.~1.49]{Huybrechts}}] \label{prop:fully-faithful-criterion}
Assume that $\varphi$ has left and right adjoints and let $\Omega\subseteq\DD$ be a spanning class. Then $\varphi$ is fully faithful if and only if $\varphi|_\Omega$ is fully faithful.
\end{proposition}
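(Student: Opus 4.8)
The plan is to prove the non‑trivial implication ``$\Leftarrow$'', the converse being immediate from the definitions. First I would replace the assertion by the standard adjoint characterisation of full faithfulness: since $\varphi$ has a right adjoint $\varphi^r$, the functor $\varphi$ is fully faithful if and only if the unit $\eta\colon\id_\DD\to\varphi^r\varphi$ is an isomorphism of functors. So it suffices to show that $C_B\coloneqq\Cone(\eta_B)$ vanishes for every $B\in\DD$, and because $\Omega$ spans $\DD$ (so $\Omega\orth=0$) it is enough to check $\Hom^\bullet(\omega,C_B)=0$ for all $\omega\in\Omega$. Applying $\Hom^\bullet(\omega,\blank)$ to the triangle $B\xrightarrow{\eta_B}\varphi^r\varphi B\to C_B$ and using the adjunction isomorphism $\Hom^\bullet(\omega,\varphi^r\varphi B)\cong\Hom^\bullet(\varphi\omega,\varphi B)$ — under which postcomposition with $\eta_B$ becomes the map induced by $\varphi$, by the triangle identities — this reduces the whole statement to the claim $(\ast)$: for all $\omega\in\Omega$ and all $B\in\DD$, the map $\varphi\colon\Hom^\bullet(\omega,B)\to\Hom^\bullet(\varphi\omega,\varphi B)$ is an isomorphism.

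Here I would observe that $(\ast)$ cannot be obtained by a d\'evissage in the variable $B$: the objects for which it holds form a triangulated subcategory containing $\Omega$, but $\Omega$ is only assumed to span, not to generate, so this subcategory need not exhaust $\DD$ — which is precisely why one needs the second adjoint. So I would bring in the left adjoint $\varphi^l$: there is a natural isomorphism $\Hom^\bullet(\varphi\omega,\varphi B)\cong\Hom^\bullet(\varphi^l\varphi\omega,B)$ under which the $\varphi$‑induced map of $(\ast)$ corresponds to precomposition with the counit $\varepsilon_\omega\colon\varphi^l\varphi\omega\to\omega$ (again a direct consequence of the triangle identities and naturality). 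Hence $(\ast)$ holds as soon as $\varepsilon_\omega$ is an isomorphism for every $\omega\in\Omega$, and I would establish this by the same cone‑and‑spanning trick on the other side: with $D_\omega\coloneqq\Cone(\varepsilon_\omega)$, applying $\Hom^\bullet(\blank,\omega')$ to $\varphi^l\varphi\omega\xrightarrow{\varepsilon_\omega}\omega\to D_\omega$ and using $\Hom^\bullet(\varphi^l\varphi\omega,\omega')\cong\Hom^\bullet(\varphi\omega,\varphi\omega')$ identifies the relevant map with $\varphi\colon\Hom^\bullet(\omega,\omega')\to\Hom^\bullet(\varphi\omega,\varphi\omega')$, an isomorphism precisely because $\varphi|_\Omega$ is fully faithful; thus $\Hom^\bullet(D_\omega,\omega')=0$ for all $\omega'\in\Omega$, so $D_\omega\in\lorth\Omega=0$ and $\varepsilon_\omega$ is an isomorphism. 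Chaining the reductions backwards then yields that $\eta$ is an isomorphism, hence $\varphi$ is fully faithful.

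The main obstacle is not conceptual but bookkeeping: I must verify carefully that each map induced by $\varphi$ corresponds, under the appropriate unit/counit adjunction isomorphism, to the stated pre‑ or post‑composition with the (co)unit, with the variances lined up correctly, and I must make sure these identifications are compatible with the shift functor so that they hold for the graded spaces $\Hom^\bullet$ and not merely for $\Hom$. Both are routine consequences of naturality and the triangle identities, but since essentially all of the content of the proof is hidden in them, I would spell them out rather than leave them implicit.
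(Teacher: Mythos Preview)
The paper does not give its own proof of this proposition: it is stated with a citation to \cite[\S1.3]{Huybrechts} and no argument is supplied. Your proposal is correct and is essentially the standard proof found in that reference, so there is nothing to compare against beyond noting that your write-up reproduces the cited argument faithfully.
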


\begin{lemma}[{\cite[Lem.~1.50]{Huybrechts}}] \label{lem:surjective-criterion}
Let $\varphi\colon \DD\to\DD'$ be a fully faithful functor with right adjoint $\varphi^r$. Then $\varphi$ is an equivalence if and only if $\varphi^r(C) = 0$ implies $C=0$ for all $C \in \DD'$.
\end{lemma}

\section{Twist functors} \label{sec:twist-functors}

\noindent
Let $\DD$ be a $\kk$-linear, Hom-finite triangulated category and $d$ an integer. For an object $F\in\DD$ we consider the following two properties:

\noindent 
\begin{tabular}{@{} p{0.1\textwidth} @{} p{0.9\textwidth} @{} }
\Sph{d} & $\Hom^\bullet(F,F) = \kk\oplus\kk[-d]$, i.e.\ the only non-trivial derived endomorphism besides the identity is a $d$-extension $F\to F[d]$, up to scalars. \\
\CY{d}  & $F$ is a $d$-Calabi-Yau object, i.e.\ $\Hom^\bullet(F,A)\cong\Hom^\bullet(A,F[d])^*$, functorially in $A\in\DD$.
\end{tabular}

$F$ is called \emph{$d$-spherelike} if it satisfies \Sph{d}. Is it called \emph{$d$-spherical} if it satisfies both \Sph{d} and \CY{d}. 
The number $d$ may be dropped when it is clear from the context or not relevant. We say that $F$ is \emph{properly spherelike} if it is spherelike but not spherical.

Non-positive numbers $d$ are allowed. Often, the case $d=0$  needs special attention. For example, given a $d$-spherical object $F$ with $d\neq0$ it is obvious that the Serre dual of an isomorphism $\Hom(F,F)\isom\kk$ is a non-trivial extension $F\to F[d]$ which can simplify arguments. Because most emphasis is on positive $d$, in the main text we will state general results (including $d=0$) but will defer the proofs for $d=0$ to the Appendix.
We want to note that for $d=0$, our definition is slightly broader than the one in \cite{Seidel-Thomas}. There they also ask for $\End^\bullet(F) \cong \kk[x]/x^2$, e.g.\ they exclude decomposable objects; see also the Appendix.

\subsection{Algebraic triangulated categories and functorial cones}

In order to define the twist functors in great generality, we have to make a rigidity assumption on our triangulated categories: they should be \emph{enhanced} in the sense of Bondal and Kapranov \cite{Bondal-Kapranov-enhanced} or \emph{algebraic} in the sense of Keller \cite{Keller-dg}; these notions are equivalent by \cite[Thm.~3.8]{Keller-dg}.

More precisely, an \emph{enhancement} of a $\kk$-linear triangulated category $\DD$ is a pretriangulated differential graded (dg) category $\AA$ together with an equivalence $H^0(\AA)\isom\DD$ of triangulated categories. We refer to 
\cite[\S4.5]{Keller-dg} or \cite[\S4.4]{Toen} for the notion of a pretriangulated dg-category (note that \cite{Toen} calls these triangulated dg-categories, provided the underlying derived categories are idempotent complete). 
In other words, for objects $X,Y$ of $\AA$, the homomorphism spaces $\Hom_\AA(X,Y)$ are dg $\kk$-modules, i.e.\ complexes of $\kk$-vector spaces. The associated homotopy category $H^0(\AA)$,  denoted $[A]$ in \cite{Toen}, has the same objects as $\AA$ and morphisms $H^0(\Hom_\AA(X,Y))$. It is a $\kk$-linear category, and triangulated since $\AA$ is pretriangulated.


This assumption implies that cones of morphisms are functorial in the following sense: given two dg-categories $\AA$ and $\BB$ with $\BB$ pretriangulated, then the category $\HH om(\AA,\BB)$ of of dg-functors $\AA\to\BB$ is itself a pretriangulated dg-category --- hence has cones (loc.\ cit.\ in \cite{Keller-dg,Toen}). 
The catchphrase about `functorial cones in $\DD\cong H^0(\AA)$' then means the following.
Every dg-functor $\tilde\varphi\colon \AA \to \AA$ gives rise to a triangle functor $\varphi\colon \DD \to \DD$ by taking $H^0$; see Proposition 10 and the following remark in \cite{Toen}. And every natural transformation of dg-functors $\tilde\varphi \to \tilde\varphi'$ gives rise to a natural transformation of triangle functors $\varphi \to \varphi'$.  
Starting with triangle functors $\varphi,\varphi' \colon \DD \to \DD$ and a natural transformation $\nu\colon \varphi \to \varphi'$ between them, which admit lifts $\tilde\varphi$, $\tilde\varphi'$ and $\tilde\nu$ for an enhancement $\AA$, we obtain a dg-functor $\Cone(\tilde \nu)$, since $\AA$ has functorial cones.
Then the triangle functor $H^0(\Cone(\tilde\nu))$ fits into the triangle:
\[
\varphi \to \varphi' \to H^0(\Cone(\tilde\nu))
\]
See \cite[\S5.1]{Toen} for details.

All triangulated categories occurring in this article and its representation-theoretic counterpart \cite{HKP:RT} are of this type: bounded derived categories of abelian categories with enough injectives have an enhancement by \cite[\S3]{Bondal-Kapranov-enhanced}. In fact, by \cite{Spaltenstein} and \cite{Lunts-Orlov} enhancements also exist for the derived category $\Db(X)$ of coherent sheaves on a quasi-projective scheme, even though there are no injectives and these enhancements are, moreover, unique.

\subsection{General twist functors and adjoints}

Assuming that $\DD$ is $\Hom^\bullet$-finite, we will associate to an object $F\in\DD$ a \emph{twist functor} $\TTT_F\colon\DD\to\DD$. We begin by considering the exact functor $\Hom^\bullet(F,\blank)\colon\DD\to\Db(\kk)$; it is well-defined because $\DD$ is $\Hom^\bullet$-finite.
Next, we get an induced exact functor $\Hom^\bullet(F,\blank)\otimes F\colon\DD\to\DD$, together with a natural transformation $\Hom^\bullet(F,\blank)\otimes F\to\id$ coming from the evaluation $\Hom^\bullet(F,A)\otimes F \to A$. 
We would like to define the twist functor by the following exact triangle
\[ \Hom^\bullet(F,\blank)\otimes F \to \id \to \TTT_F . \]
For the functoriality we assume that $\DD$ is algebraic and idempotent complete. Let $\AA$ and $\mathcal V$ be enhancements of $\DD$ and $\Db(\kk)$, respectively. Choose a dg lift of $\blank \otimes F \colon \Db(\kk) \to \DD$ to a dg functor $\mathcal V \to \AA$.
By \cite[\S2.2]{Anno-Logvinenko}, there are also canonical lifts for its adjoints, especially for $\Hom^\bullet(F,\blank)$, and canonical lifts of the corresponding adjunction units and counits. The latter gives a lift of the evaluation $\Hom^\bullet(F,\blank)\otimes F \to \id$ to $\AA$.
As discussed above, there is a dg-functor $\tilde \TTT_F$ completing the lifted evaluation to an exact triangle, so we can define $\TTT_F$ as $H^0 (\tilde\TTT_F)$.
This construction is well known; see \cite{Seidel-Thomas} and the much more general \cite{Anno-Logvinenko}. For a definition of twist functors using Fourier-Mukai kernels, see \cite[\S8]{Huybrechts}.

We mention in passing that the functor $\Hom^\bullet(F,\blank)\otimes F\colon\DD\to\DD$ --- and hence the twist functors --- can exist in greater generality; it suffices that $\Hom^\bullet(F,A)\otimes F$ exist for all $A\in\DD$. Our assumption that $\DD$ is $\Hom^\bullet$-finite ensures this.

\medskip\noindent
\emph{Throughout the rest of the article, whenever twist functors are mentioned we presume that $\DD$ is algebraic, idempotent complete and $\Hom^\bullet$-finite.} 
\medskip

We are going to describe the adjoints of $\TTT_F$. The left adjoint exists in full generality; the right adjoint needs a Serre dual $\SSS F$ of $F$. For any object $G\in\DD$, the endofunctor $\Hom^\bullet(F,\blank)\otimes G$ has adjoints
\[ \begin{array}{ *{3}{r@{\:}c@{\:}l c} }
   \Hom^\bullet(\blank,G)^* &\otimes& F        & \dashv &
   \Hom^\bullet(F,\blank)   &\otimes& G        & \dashv &
   \Hom^\bullet(G,\blank)   &\otimes& \SSS F
\end{array} \]
Note that a triangle of functors $\varphi\to\psi\to\eta$ leads to a triangle $\eta^l\to\psi^l\to\varphi^l$ of their left adjoints. 
To see this, apply first $\Hom^\bullet(A,\blank)$ and then adjunction to $\varphi(B)\to\psi(B)\to\eta(B)$ to yield a triangle $\Hom^\bullet(\varphi^l(A),B)\to\Hom^\bullet(\psi^l(A),B)\to\Hom^\bullet(\eta^l(A),B)$ in $\Db(\kk)$, functorial in both $A$ and $B$. 
This triangle is induced, thanks to the Yoneda lemma, by natural transformations $\psi^l \to \varphi^l$ and $g\colon\eta^l\to\psi^l$.
Next we apply $\Hom^\bullet(\blank,B)$ to the completed triangle
 $\eta^l(A) \smash{\xxrightarrow{g_A}} \psi^l(A) \to C(g_A)$, 
and deduce isomorphisms $C(g_A) \isom \varphi^l(A)$, as before by the Yoneda lemma. This shows that $\eta^l(A)\to \psi^l(A) \to \varphi^l(A)$ is again a triangle, whose functoriality is immediate.
The analogous statement for the right adjoints holds likewise. For the twist functor $\TTT_F$ under consideration, we get
\[
\TTl_F \to \id \to \Hom^\bullet(\blank,F)^*\otimes F \quad\text{and}\quad
\TTr_F \to \id \to \Hom^\bullet(F,\blank)\otimes \SSS F.
\]
We will prove in Lemma~\ref{lem:spanningprop} that for a spherical twist the left and right adjoints coincide and give the inverse. For a properly spherelike object $F$, the adjoints are necessarily distinct.

\subsection{Special cases of twist functors} \label{sub:special-cases}

The twist functors are most interesting when the derived endomorphism algebras are small:

\subsubsection*{Zero object}
Clearly, $\TTT_F = \id$ for $F=0$. From now on, assume $F$ non-zero.
\subsubsection*{Exceptional objects}
An exceptional object $F$, i.e.\ $\Hom^\bullet(F,F)=\kk$, is one with the smallest derived endomorphism ring. Each such object yields two semi-orthogonal decompositions $\sod{F\orth,F}$ and $\sod{F,\lorth F}$ of $\DD$. Furthermore, the twist functor $\TTT_F$ is a right adjoint of the inclusion $F\orth\embed\DD$; the shifted functor $\TTT_F[-1]$ is just the left mutation along $F$, as in \cite[\S7.2.2]{Rudakov}. An exceptional object is typically not studied for its own sake. Rather, one is looking for a full exceptional sequence --- a `basis' for the category --- or tries to strip off an exceptional object or sequence, by considering the orthogonal complement. See \cite{Rudakov} for many geometric examples of this approach.
\subsubsection*{Spherelike objects}
If $\Hom^\bullet(F,F)$ is two-dimensional, then by definition $F$ is spherelike. This is the next simplest case after exceptional objects, in terms of complexity. Spherical objects can be characterised as the simplest type of Calabi-Yau objects (leaving trivial examples aside, like $\kk$ in $\Db(\kk\hh\mod)$).

A spherical object $F$ is interesting on its own, since the associated twist functor $\TTT_F$ is an autoequivalence of the category \cite{Seidel-Thomas}. Collections of spherical objects provide interesting subgroups of autoequivalences; a topic related to (generalised) braid group actions and also first taken up in \cite{Seidel-Thomas} and independently in \cite{Rouquier-Zimmermann}.

\subsection{Spherelike twist functors}

In this article, we will deal exclusively with spherelike objects and show that their twist functors still have some interesting properties even though they are fully faithful only if the object is already spherical. Remarkably, an abstract spherelike object becomes spherical in a naturally defined subcategory. We start by giving a number of basic properties of twist functors, somewhat more careful than in \cite{Huybrechts} or \cite{Seidel-Thomas}, as we are not only interested in autoequivalences.

In the following lemma, we write $\Hom^\bullet_\circ(F,F)$ for the complex of \emph{traceless derived endomorphisms} of an object $F$; it is defined as the cone of the natural map $\kk\cdot\id_F\to\Hom^\bullet(F,F)$.

\begin{lemma} \label{lem:spanningprop}
Let $F\neq0$ be an object of $\DD$.

\fixedwidthtabular
(1) & $\TTT_F|_{F\orth} = \id$ and $\TTT_F|_\sod{F} = [1]\otimes\Hom^\bullet_\circ(F,F)$. \\
(2) & $\Hom^\bullet(F,F)\isom\Hom^\bullet(\TTT_F(F),\TTT_F(F))$ if and only if $F$ is spherelike. \\
(3) & If $\TTT_F$ is fully faithful, then $\TTT_F$ is an equivalence.\\
(4) & If $F$ is spherical, then $\TTT_F$ is an equivalence.\\
(5) & If $\TTT_F$ is an equivalence and $\DD$ has a Serre functor and is Krull-Schmidt,
      then $F$ is spherical.
\end{tabular}
\end{lemma}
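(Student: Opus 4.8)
The statement to establish is Lemma~\ref{lem:spanningprop}(5): if $\TTT_F$ is an equivalence, $\DD$ has a Serre functor, and $\DD$ is Krull--Schmidt, then $F$ is spherical. Since $F$ is assumed (implicitly, by the setup of the lemma and the context of this subsection) to be spherelike, what remains is to verify the Calabi--Yau property \CY{d} for the appropriate $d$; equivalently, to show $\SSS(F)\cong F[d]$ where $d$ is the degree in which $\Hom^\bullet(F,F)$ has its second summand. The natural route is to extract information about $\SSS(F)$ from the triangle defining the right adjoint,
\[ \TTr_F \to \id \to \Hom^\bullet(F,\blank)\otimes\SSS(F) , \]
combined with the hypothesis that $\TTT_F$ is invertible, hence $\TTr_F = \TTT_F\inv$ is also an equivalence.

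\textbf{Key steps.}
First I would record that, $\TTT_F$ being an equivalence, $\TTl_F$ and $\TTr_F$ are both two-sided inverses of $\TTT_F$, hence $\TTl_F\cong\TTr_F$. Comparing the two adjoint triangles
\[ \TTl_F \to \id \to \Hom^\bullet(\blank,F)^*\otimes F \qquad\text{and}\qquad \TTr_F \to \id \to \Hom^\bullet(F,\blank)\otimes\SSS(F) , \]
and using that a morphism of functors determines its cone up to (non-unique) isomorphism, one gets a functorial isomorphism $\Hom^\bullet(\blank,F)^*\otimes F \cong \Hom^\bullet(F,\blank)\otimes\SSS(F)$ of endofunctors of $\DD$. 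Next I would evaluate this isomorphism on a well-chosen object. Evaluating at $F$ itself gives $\Hom^\bullet(F,F)^*\otimes F\cong\Hom^\bullet(F,F)\otimes\SSS(F)$ as objects of $\DD$; since $\Hom^\bullet(F,F)$ and its dual $\Hom^\bullet(F,F)^*$ are finite-dimensional graded vector spaces of the same total dimension (namely $2$), and $F$ is indecomposable (spherelike objects with $\Hom^\bullet(F,F)=\kk\oplus\kk[-d]$ have local, in particular connected, endomorphism ring, so are indecomposable), the Krull--Schmidt property forces $\SSS(F)$ to be a shift of $F$, say $\SSS(F)\cong F[a]$ for some integer $a$: indeed both sides split into two indecomposable summands, each a shift of $F$ on the left, so each summand on the right must be such a shift, and one can read off $a$ by matching graded dimensions. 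Finally, to pin down $a=d$: apply $\Hom^\bullet(\blank,F)$ or compare with the known $\Hom^\bullet(\TTT_F(F),\TTT_F(F))\cong\Hom^\bullet(F,F)$ from part~(2), or simply observe from $\Hom^\bullet(F,F)\cong\Hom^\bullet(F,\SSS F)^*=\Hom^\bullet(F,F[a])^*$ that $a$ must equal $d$ so that the graded vector space $\kk\oplus\kk[-d]$ is self-dual up to the shift by $a$; this forces $a=d$, and then $\SSS(F)\cong F[d]$ together with functoriality of Serre duality in the second variable upgrades this to the functorial isomorphism $\Hom^\bullet(F,A)\cong\Hom^\bullet(A,F[d])^*$, which is exactly \CY{d}.

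\textbf{Main obstacle.}
The delicate point is the passage from an isomorphism of \emph{functors} $\Hom^\bullet(\blank,F)^*\otimes F\cong\Hom^\bullet(F,\blank)\otimes\SSS(F)$ to the conclusion $\SSS(F)\cong F[d]$ with control over the shift, and especially the final step of upgrading an object-level isomorphism $\SSS(F)\cong F[d]$ to the \emph{natural} isomorphism required by \CY{d}. The first half is where Krull--Schmidt is genuinely used, and one must be a little careful that the cone of a natural transformation is only determined up to non-canonical isomorphism, so one gets an isomorphism of functors but not a canonical one; this is harmless for our purpose. The second half is actually automatic: once $\SSS(F)\cong F[d]$ as objects, the defining functorial isomorphism $\Hom^\bullet(A,\SSS F)\cong\Hom^\bullet(F,A)^*$ of Serre duality, transported along $\SSS(F)\cong F[d]$, yields $\Hom^\bullet(A,F[d])\cong\Hom^\bullet(F,A)^*$ naturally in $A$, which is \CY{d}. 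The case $d=0$, as flagged in the text, may need the separate treatment deferred to the Appendix, since there the identity summand and the extension summand of $\Hom^\bullet(F,F)$ sit in the same degree and the graded-dimension bookkeeping that isolates $a=d$ must be replaced by a more hands-on argument.
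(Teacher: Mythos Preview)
Your overall strategy is sound and lands on the same key step as the paper: the isomorphism $F\oplus F[d]\cong\SSS F\oplus\SSS F[-d]$, followed by Krull--Schmidt and the observation that $\SSS(F)\cong F[d]$ already gives the functorial Calabi--Yau property via the defining isomorphism of Serre duality. The paper, however, reaches that isomorphism by a different comparison of triangles. Rather than confronting the left and right adjoint triangles, it uses that an autoequivalence commutes with the Serre functor: the natural transformation $\TTT_F\to\SSS\TTT_F\SSS\inv$ is then an isomorphism, and together with $\id\isom\SSS\,\id\,\SSS\inv$ one obtains an explicit morphism between the defining triangle of $\TTT_F$ and its Serre conjugate; the induced map on the remaining (left-hand) terms yields $\Hom^\bullet(F,\blank)\otimes F\cong\Hom^\bullet(F,\SSS\inv(\blank))\otimes\SSS F$, and evaluation at $\SSS F$ gives the equation above. (Also, a small correction: $F$ spherelike is not assumed in (5); it is deduced from (2), since $\TTT_F$ an equivalence makes it fully faithful on $\{F\}$.)

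Your route via $\TTl_F\cong\TTr_F$ has a genuine gap exactly where the paper's argument builds in the needed compatibility. From the functor isomorphism $\TTl_F\cong\TTr_F$ and the tautology $\id=\id$ you cannot yet conclude that the third terms $\Hom^\bullet(\blank,F)^*\otimes F$ and $\Hom^\bullet(F,\blank)\otimes\SSS(F)$ are isomorphic: for that you must know that the \emph{morphisms} $\TTl_F\to\id$ and $\TTr_F\to\id$ match under your identification. Both are mates of the same $\alpha\colon\id\to\TTT_F$, but with respect to different adjunctions ($\TTl_F\dashv\TTT_F$ versus $\TTT_F\dashv\TTr_F$), and showing these two mates coincide is a compatibility statement, not a formality. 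The gap is repairable at the object level, which is all you actually use: both triangles evaluated at $F$ have first term $\TTT_F\inv(F)\cong F[d-1]$, and for $d\neq 0,1$ one has $\Hom(F[d-1],F)=0$, while for $d=1$ the map must be zero since the cone is visibly nonzero; either way the two cones agree. The paper's approach avoids this detour because the morphism of triangles is constructed directly.
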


\begin{proof}

(1) This follows at once from the defining triangle for twist functors.

(2) The condition means that $\TTT_F$ is fully faithful on the singleton $\{F\}$. 
The short proof for the ``if''-part can be found in \cite[Thm.~1.27]{Ploog-PhD}, which we replicate for the convenience of the reader.
For any $f\in\Hom^d(F,F)$,
\[
\xymatrix@R=5ex@C=5em{
F[-d]         \ar[r]^-{(f[-d],-\id)}    \ar@{..>}[d]_{\TTT_F(f)[-1]} & 
F\oplus F[-d] \ar[r]^-{\id\oplus f[-d]} \ar[d]^{\sqmat{f}{0}{0}{f[-d]}}       &
F \ar[d]^f    \ar[r]^0                                             &
F[1-d] \ar@{..>}[d]^{\TTT_F(f)}\\
F \ar[r]_-{(f,-\id)} & F[d]\oplus F \ar[r]_-{\id\oplus f} & F[d] \ar[r]_0 & F[1]
}
\]
is a commutative diagram of exact triangles.
Thus $\TTT_F(f)=f[1-d]$ and hence $\Hom^i(F,F) \isom \Hom^i(\TTT_F(F),\TTT_F(F))$ for $i=d$. For $i \neq d$, both sides are zero.
 
For the converse, note that $\Hom^\bullet_\circ(F,F)$ has to be one-dimensional, so $F$ is spherelike.

(3) We show now that $\TTl_F(D) \cong 0$ implies $D\cong 0$. Then we can apply the result analogous to Lemma~\ref{lem:surjective-criterion} for left adjoints to deduce that $\TTT_F$ is an equivalence:
Assuming $\TTl_F(D) \cong 0$, the triangle defining $\TTl_F$ boils down to $D \cong \Hom^\bullet(D,F)^* \otimes F$. Applying $\TTl_F$ to this isomorphism, we find $0\cong \Hom^\bullet(D,F)^*\otimes\TTl_F(F)$. Since $\TTl_F(F) = F[d-1]$, we get $\Hom^\bullet(D,F)^*\cong0$ and hence $D \cong \Hom^\bullet(D,F)^* \otimes F \cong 0$.

(4) Assume that $F$ is spherical. 
We start by showing that $\Omega\coloneqq \{F\} \cup F\orth$ is a spanning class for $\DD$: 
The property $\Omega\orth=0$ follows immediately from the construction of $\Omega$. The other vanishing $\lorth\Omega=0$ uses $\SSS F = F[d]$, i.e.\ $F$ is a Calabi-Yau object.

We claim that the maps $\Hom^\bullet(A,A')\to\Hom^\bullet(\TTT_F(A),\TTT_F(A'))$ induced by $\TTT_F$ are isomorphisms for all $A,A'\in\Omega$. 
This is true for $A=A'=F$ by (2). It holds for $A,A'\in F\orth$ as $\TTT_F|_{F\orth}$ is the identity. Both Hom spaces vanish if $A=F$ and $A'\in F\orth$. Finally, we also have vanishing in the remaining case $A\in F\orth$ and $A'=F$ --- here we invoke $\SSS F = F[d]$ again: $\Hom^\bullet(A,F)=\Hom^\bullet(F,A[-d])^*=0$. Thus $\TTT_F$ is fully faithful on the spanning class $\Omega$ and then fully faithful altogether by Proposition~\ref{prop:fully-faithful-criterion}. Using (3) we are done.

(5) Now assume that $F$ is an object such that $\TTT_F$ is an equivalence. We start with the observation that for any fully faithful functor $\varphi$, there is a natural transformation $\varphi\to\SSS\varphi\SSS\inv$. If $\varphi$ is an equivalence, the transformation is a functor isomorphism.

We look at the triangle defining the twist and at its Serre conjugate:
\[ \xymatrix@R=3ex{
   \Hom^\bullet(F,\blank) \otimes F   \ar@{..>}[d] \ar[r] & \id   \ar[d] \ar[r] & \TTT_F    \ar[d] \\
   \Hom^\bullet(F,\SSS\inv(\blank)) \otimes \SSS F \ar[r] & \SSS\SSS\inv \ar[r] & \SSS\TTT_F\SSS\inv
} \]
The two right-hand vertical maps define the one on the left. As $\id$ and $\TTT_F$ are equivalences, these functor maps are actually isomorphisms, hence the left-hand map is as well. Plugging in $\SSS F$, we get 
\[ \Hom^\bullet(F,\SSS F)\otimes F\isom\Hom^\bullet(F,F)\otimes\SSS F . \]
Since we already know from (2) that $F$ is $d$-spherelike for some $d$, we thus get
$F\oplus F[d] \cong \SSS F\oplus \SSS F[-d]$. 
 
For $d\neq0$, we note that $F$ is indecomposable from $\Hom(F,F)=\kk$. Hence $F \cong \SSS F[-d]$, as $\DD$ is Krull-Schmidt. For $d=0$, we refer to the Appendix.
\end{proof}

\begin{remark}
If we just assume that $\DD$ is algebraic and idempotent complete and moreover $\Hom^\bullet(A,F)$ and $\Hom^\bullet(F,A)$ are finite-dimensional for all $A \in \DD$, then the statements (4) and (5) of the preceeding lemma still hold by \cite[Thm.~5.1]{Anno-Logvinenko}.
\end{remark}

So far, we have solely considered $\TTT_F$ as an endofunctor of $\DD$. In the following, we will also take into account subcategories of $\DD$ inheriting the twist functor. The subsequent lemma shows a dichotomy for such subcategories. Note that $F$ will be spherelike in any subcategory containing it (recall that our subcategories are always full). However, $F$ might become spherical in a suitable subcategory and in fact, in the next section we will look for the maximal subcategory containing $F$ on which $\TTT_F$ becomes an equivalence.

\begin{lemma}
\label{lem:subcat-dicho}
Let $\UU\subseteq\DD$ be a  triangulated subcategory which is closed under taking direct summands. The twist functor $\TTT_F$ induces an endofunctor of $\UU$, i.e.\ $\TTT_F(\UU)\subseteq\UU$, if and only if either $F\in\UU$ or $\UU\subseteq F\orth$. More precisely:

\fixedwidthtabular
(1) &  If $F\in\UU$, then 
$\TTT_F\colon\UU\to\UU$ exists and coincides with the restriction of $\TTT_F\colon\DD\to\DD$ to $\UU$.\\
(2) & If $\UU \subseteq F\orth$, then the induced endofunctor $\TTT_F|_\UU$ is $\id_\UU$.\\
\end{tabular}
\end{lemma}

\begin{proof}
Only one implication of the equivalence is not obvious. So assume $\TTT_F(\UU)\subseteq\UU$ and pick $U\in\UU$. As the last two objects of the exact triangle $\Hom^\bullet(F,U)\otimes F\to U\to\TTT_F(U)$ are in $\UU$, we find $\Hom^\bullet(F,U)\otimes F\in\UU$ as well. Since $\UU$ is closed under summands, this boils down to $F\in\UU$ or $\Hom^\bullet(F,U)=0$. We are done, since the existence of a single $U\in\UU$ with $U\notin F\orth$ forces $F\in\UU$ by the same reasoning.

For (1), let $F\in\UU$ and $\UU$ be a full subcategory.
Let $\AA$ be an enhancement of $\DD$, especially it is a pre-triangulated dg category. Since $\UU$ is a full triangulated subcategory, the preimage of $\UU$ under the projection $\AA \to H^0(\AA)=\DD$ is an enhancement of $\UU$; see \cite[\S1.5.5]{Beilinson-Vologodsky}. So $\UU$ is algebraic. Since $\UU$ contains $F$, the twist functor exists.
Using these enhancements, the triangles defining the twist functor for $\UU$ and the one for $\DD$ can be lifted in a compatible manner to the respective enhancements.

(2) The claim for $\UU \subseteq F\orth$ follows from Lemma~\ref{lem:spanningprop}(1).
\end{proof}

\section{Spherical subcategories} \label{sec:spherical_subcategories}

\noindent
In this section, we are going to associate to a spherelike object $F$ a canonical subcategory $\DD_F$ where it becomes spherical. Therefore we call $\DD_F$ the \emph{spherical subcategory} of $F$. Before that we need a `measure' for the asphericity of $F$. Recall that $F$ is $d$-spherelike if $\Hom^\bullet(F,F)\cong\kk\oplus\kk[-d]$.

\subsection{The asphericity triangle} \label{sub:asphericity}

To a $d$-spherelike object $F\in\DD$ with Serre dual, we will associate a canonical triangle, the \emph{asphericity triangle}
\[  F \xrightarrow{w} \omega(F) \to Q_F \]
whose last term $Q_F$ is called the \emph{asphericity} $Q_F$ of $F$. 
It measures how far $F$ is from being spherical. If the object $F$ is clear from the context, we will often write  $Q$ in place of $Q_F$.

We begin by putting $\omega(F) \coloneqq  \SSS F[-d]$. The notation is borrowed from algebraic geometry; see Section~\ref{sec:AG}. Then we find
\[ \Hom^\bullet(F,\omega(F)) = \Hom^\bullet(F,\SSS F[-d]) = 
   \Hom^\bullet(F,F)^*[-d] = \kk \oplus \kk[-d] .\]
So for $d\neq0$, there is a non-zero map $w\colon F \to \omega(F)$, unique up to scalars. For $d=0$, the construction is slightly more elaborate and we give it in the Appendix.

\begin{lemma} \label{lem:Hom(F,Q)=0}
For a $d$-spherelike $F$ with Serre dual we have $\Hom^\bullet(F,Q) = 0$, i.e.\ $F\in\lorth Q$.
\end{lemma}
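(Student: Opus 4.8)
The plan is to apply the cohomological functor $\Hom^\bullet(F,\blank)$ to the asphericality triangle $F\xrightarrow{w}\omega(F)\to Q$ and analyse the resulting long exact sequence. We already computed $\Hom^\bullet(F,\omega(F))=\kk\oplus\kk[-d]$, and $\Hom^\bullet(F,F)=\kk\oplus\kk[-d]$ by the spherelike hypothesis. So the long exact sequence reads
\[ \cdots \to \Hom^\bullet(F,F) \xrightarrow{w_*} \Hom^\bullet(F,\omega(F)) \to \Hom^\bullet(F,Q) \to \Hom^\bullet(F,F)[1] \to \cdots \]
with the two outer terms abstractly isomorphic (as graded vector spaces) to $\kk\oplus\kk[-d]$. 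The key point is therefore to show that the map $w_*\colon\Hom^\bullet(F,F)\to\Hom^\bullet(F,\omega(F))$ induced by postcomposition with $w$ is an isomorphism; then $\Hom^\bullet(F,Q)=0$ follows immediately from exactness.

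For the step $w_*$ is an isomorphism, I would argue first in degree $0$. The map $w_*$ sends $\id_F$ to $w$, which is non-zero by construction, so in degree $0$ the map $\kk=\Hom(F,F)\to\Hom(F,\omega(F))=\kk$ is non-zero, hence an isomorphism. For the degree-$d$ part, I would use Serre duality: the pairing $\Hom^\bullet(F,\omega(F))\otimes\Hom^\bullet(\omega(F),\SSS(F))\to\kk$ coming from the Serre-duality pairing described in the Preliminaries is non-degenerate (note $\omega(F)=\SSS(F)[-d]$, so $\Hom^\bullet(\omega(F),\SSS(F))=\Hom^\bullet(F,F)[d]$ as well). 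Dualising, the map $w_*$ in degree $d$ is identified with the transpose of precomposition $w^*\colon\Hom^\bullet(\omega(F),\SSS F)\to\Hom^\bullet(F,\SSS F)$ in degree $0$, i.e. with $\Hom^\bullet(F,F)[d]\to\Hom^\bullet(F,F)^*$; and in degree $0$ this is again the statement that $w$ itself is non-zero. So the non-vanishing of the single map $w$ propagates, via Serre duality, to an isomorphism in both the relevant degrees, making $w_*$ an isomorphism of graded vector spaces. (The $d=0$ case is, as the authors note, deferred to the Appendix; here the one-dimensionality of the degree-$0$ part is what needs the separate construction of $w$.)

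The main obstacle I anticipate is making the Serre-duality identification of $w_*$ with its own transpose completely precise — one has to track carefully that the canonical pairing on $\Hom^\bullet(F,\omega(F))$ is compatible, under the functoriality of the $\sigma$ maps, with the pairing on $\Hom^\bullet(F,F)$, so that postcomposition with $w$ on one side corresponds to precomposition with $w$ on the other. This is exactly the kind of compatibility encoded in the identity $\sigma_A(gf)(\id_A)=\sigma_B(g)(f)$ recorded in the Preliminaries, so the argument should go through, but it requires bookkeeping rather than a new idea. An alternative, perhaps cleaner, route is: since $\Hom^\bullet(F,F)$ and $\Hom^\bullet(F,\omega(F))$ have the same (finite) total dimension, it suffices to show $w_*$ is injective, or that $\Hom^\bullet(F,Q)$ vanishes in the two possibly-nonzero degrees by a dimension count together with the degree-$0$ surjectivity; the degree-$0$ entry of $\Hom^\bullet(F,Q)$ vanishes because $w_*$ is onto in degree $0$, and then the Euler-characteristic / dimension balance in the long exact sequence forces the degree-$d$ entry to vanish as well once one knows the connecting maps land correctly. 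Either way the crux is the non-degeneracy supplied by the Serre dual, so I would lead with the long exact sequence, reduce to $w_*$ being an isomorphism, and settle that via the Serre-duality pairing.
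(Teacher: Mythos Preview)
Your proposal is correct and takes essentially the same approach as the paper: apply $\Hom^\bullet(F,\blank)$ to the asphericality triangle and show that $w_*$ is an isomorphism. The paper's treatment of the degree-$d$ step is a cleaner version of what you outline: rather than dualising $w_*$ to $w^*$, it directly uses the non-degenerate Serre pairing $\Hom(F[-d],F)\otimes\Hom(F,\omega(F))\to\Hom(F[-d],\omega(F))$ between one-dimensional spaces to conclude that $w_*(\varepsilon)=w\circ\varepsilon\neq0$; this is exactly the compatibility you anticipated needing, with the bookkeeping already absorbed into the pairing statement from the Preliminaries. Your ``alternative'' dimension-count route is not quite self-contained --- knowing $w_*$ is an isomorphism in degree $0$ and that total dimensions match still leaves the possibility $\Hom^{d-1}(F,Q)\cong\Hom^d(F,Q)\cong\kk$, so you really do need the degree-$d$ argument.
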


\begin{proof}
We apply $\Hom^\bullet(F,\blank)$ to the triangle $F \xxrightarrow{w} \omega(F) \to Q$ and obtain
\[ \Hom^\bullet(F,F) \xrightarrow{w_*} \Hom^\bullet(F,\omega(F)) \to \Hom^\bullet(F,Q) \]
where $\Hom^\bullet(F,F)$ and $\Hom^\bullet(F,\omega(F))$ are isomorphic to $\kk\oplus\kk[-d]$. Obviously $w_*(\id)=w$. For $d\neq0$, denote non-zero $d$-extensions $\varepsilon\colon F[-d]\to F$ and $\eta\colon F[-d]\to\omega(F)$. We look at the pairing
\[ \Hom(F[-d],F) \otimes \Hom(F,\omega(F)) \to \Hom(F[-d],\omega(F)), \quad 
     \varepsilon \otimes w             \mapsto w\circ\varepsilon \]
which is non-degenerate by Serre duality. Now all three Hom-spaces are one-dimensional, so that $w\circ\varepsilon$ is a non-zero multiple of $\eta$. Hence $w_*$ is an isomorphism and thus $\Hom^\bullet(F,Q) = 0$, as desired.

For the case $d=0$ we refer to the Appendix.
\end{proof}

\begin{remark} \label{rmk:Q-not-left-orthogonal}
For $F$ not spherical, i.e.\ $Q\neq0$, the marked morphism in the shifted triangle $\omega(F)\to Q\xrightarrow{*} F[1]$ is non-zero.
Otherwise $\omega(F) \cong F \oplus Q$, hence $\End^\bullet(\omega(F)) \cong \Hom^\bullet(F[-d],\omega(F))^* \cong \Hom^\bullet(F[-d],F\oplus Q)^* \cong \End^\bullet(F)^*[d]$. 
This is absurd since $\End^\bullet(\omega(F)) \supseteq \End^\bullet(F) \oplus \End^\bullet(Q)$.

So the marked map is non-zero, thus $Q$ is not left orthogonal to $F$, i.e.\ $Q\in F\orth\backslash\lorth F$. In particular for $d\neq1$, the asphericity always spoils fully faithfulness of $\TTT_F$, in view of
 $\Hom^\bullet(Q,F) \to \Hom^\bullet(\TTT_F(Q),\TTT_F(F)) = \Hom^\bullet(Q,F)[1-d]$
with $\Hom^1(Q,F)\neq0$ (note $Q \in F\orth$ implies $\TTT_F(Q)=Q$).
\end{remark}

\begin{remark}[{cf.~\cite[\S1a]{Seidel-Thomas}}]
\label{rem:Kgroup}
Give a $2d$-spherelike object $F\in\DD$ with its twist functor $\TTT_F\colon \DD\to\DD$, the endomorphism $t_F\coloneqq\TTT_F^{\:K} \colon K(\DD)\to K(\DD)$ of the $K$-group of $\DD$ is an involution with the propery $t_F([F]) = -[F]$.
This follows immediately from $t_F(x) = x -\chi(F,x)[F]$ for all $x\in K(\DD)$ and unravelling $t_F^2(x)$, using $\chi(F,F)=2$ as $F$ is $2d$-spherelike. 
Here, $\chi([F],[G]) = \chi(F,G) \coloneqq \sum_i (-1)^i \Hom(F,G[i])$ is the Euler pairing.

Thus spherelike objects might stand behind $K$-group involutions of geometric interest, as they allow lifts to endofunctors, even though not auto\-equivalences in general. Note that if $\DD$ is a $2d$-Calabi-Yau category, then the involution $t_F$ is actually the reflection along the root $[F]\in K(\DD)$.

Furthermore, these involutions satisfy the braid relations. More precisely, let $E,F\in\DD$ be spherelike with $\chi(E,E)=\chi(F,F)=2$ and $\chi(E,F)=\chi(F,E)\eqqcolon s$. Another direct computation shows that $t_E t_F = t_F t_E$ if $s=0$, and $t_E t_F t_E = t_F t_E t_F$ if $s=\pm1$. 
\end{remark}

\subsection{The spherical subcategory $\boldsymbol{\DD_F}$} \label{sub:spherical-subcategory}

We define the \emph{spherical subcategory $\DD_F$} and the \emph{asphericity subcategory $\QQ_F$} of $F$ as
\[ \DD_F \coloneqq  \lorth Q_F, \qquad \QQ_F \coloneqq  \DD_F\orth = (\lorth Q_F)\orth , \]
these are full, triangulated subcategories of $\DD$.
For the asphericity subcategory, we have the inclusion
 $\sod{Q_F} \subset \QQ_F$.
If $Q_F$ is an exceptional object, then the two categories coincide. This will occur in examples considered below, but we will also encounter cases where the inclusion is strict.
By Lemma~\ref{lem:Hom(F,Q)=0}, $\DD_F$ contains $F$ and, by Lemma~\ref{lem:subcat-dicho}, the twist functor $\TTT_F\colon \DD_F \to \DD_F$ exists.

\begin{theorem} \label{thm:spherelike-CY}
Let $\DD$ be a $\kk$-linear, Hom-finite triangulated category. 
If $F$ is a $d$-spherelike object of $\DD$ with Serre dual, then $F$ is $d$-Calabi-Yau and hence
$d$-spherical in $\DD_F$.
\end{theorem}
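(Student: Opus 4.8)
The plan is to show that $\Hom^\bullet(F, A) \cong \Hom^\bullet(A, F[d])^*$ functorially for all $A \in \DD_F = {}^\perp Q_F$. The starting point is the asphericality triangle $F \xrightarrow{w} \omega(F) \to Q_F$ with $\omega(F) = \SSS(F)[-d]$. First I would apply $\Hom^\bullet(A, \blank)$ to this triangle for an arbitrary $A \in \DD_F$. Since $A \in {}^\perp Q_F$ means $\Hom^\bullet(A, Q_F) = 0$, the long exact sequence collapses to give an isomorphism $\Hom^\bullet(A, F) \isom \Hom^\bullet(A, \omega(F)) = \Hom^\bullet(A, \SSS(F)[-d])$. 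Now Serre duality for $F$ (which exists by hypothesis) gives $\Hom^\bullet(A, \SSS(F)) \cong \Hom^\bullet(F, A)^*$, so shifting yields $\Hom^\bullet(A, F) \cong \Hom^\bullet(F, A)^*[-d]$, i.e.\ $\Hom^\bullet(F, A) \cong \Hom^\bullet(A, F[d])^*$. Dualising and reindexing the grading is the routine part.

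The step requiring care is \emph{functoriality} in $A$: the Calabi-Yau condition \CY{d} demands a natural isomorphism, not just isomorphisms of graded vector spaces. I would check that the composite isomorphism $\Hom^\bullet(A,F) \cong \Hom^\bullet(F,A)^*[-d]$ is built entirely from natural transformations — the connecting maps in the long exact sequence obtained by applying $\Hom^\bullet(\blank, \blank)$ to the (fixed) triangle $F \to \omega(F) \to Q_F$ are natural in $A$, and the Serre duality isomorphism $\sigma$ is functorial by definition (see the Serre duality discussion in the Preliminaries). The only subtlety is that the first isomorphism $\Hom^\bullet(A,F) \isom \Hom^\bullet(A,\omega(F))$ is induced by the \emph{fixed} map $w_*$, hence is visibly natural in $A$; so the whole chain is natural. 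For $d \neq 0$ this is clean; for $d = 0$ the map $w$ was constructed differently (deferred to the Appendix), so that case needs the Appendix's construction, which I would flag and defer accordingly.

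Finally, once \CY{d} is established on $\DD_F$, the object $F$ satisfies both \Sph{d} (automatic, since $\Hom^\bullet_{\DD_F}(F,F) = \Hom^\bullet_\DD(F,F) = \kk \oplus \kk[-d]$ because $\DD_F$ is a full subcategory containing $F$) and \CY{d}, hence is $d$-spherical in $\DD_F$ by definition. The main obstacle I anticipate is purely the bookkeeping around naturality and the grading shifts — there is no deep difficulty, but one must be careful that the Serre dual of $F$ in the \emph{ambient} category $\DD$ genuinely serves to produce the Calabi-Yau isomorphism \emph{within} $\DD_F$, which works precisely because $\DD_F$ is full, so all the relevant $\Hom^\bullet$ spaces are computed the same way in $\DD_F$ as in $\DD$.
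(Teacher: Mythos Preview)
Your proposal is correct and follows exactly the same approach as the paper: apply $\Hom(A,\blank)$ to the asphericality triangle, use $A\in{}\orth Q_F$ to obtain $\Hom(A,F)\cong\Hom(A,\omega(F))$, and then invoke Serre duality for $F$. Your additional care about naturality in $A$ and the remark that fullness of $\DD_F$ lets one compute Hom spaces in the ambient category are valid elaborations, but the paper's proof is essentially the one-line version of what you wrote.
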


\begin{proof}
We want to show that $\Hom(A,F) \cong \Hom(F,A[d])^*$ for all $A \in \DD_F$.
For $A\in\DD_F=\lorth Q$, we apply $\Hom(A,\blank)$ to the triangle $F \to \omega(F) \to Q$,
and get two isomorphisms $\Hom(A,F) \cong \Hom(A,\omega(F)) \cong \Hom(F,A[d])^*$ each of them functorial in $A$ (the second one by definition of Serre duals).
\end{proof}

\begin{corollary} \label{cor:twist_adjoints}
Let $F\in\DD$ be as above and assume that $\DD$ is $\Hom^\bullet$-finite, idempotent complete and algebraic. Then $\TTT_F$ induces auto\-equivalences of $\QQ_F$ and $\DD_F$.
In fact, $\TTT_F\colon \DD_F \isom \DD_F$ is the restriction of $\TTT_F\colon\DD\to\DD$ to $\DD_F$.

If moreover $\DD$ possesses a Serre functor, then the left adjoint $\TTl_F$  induces autoequivalences of $\DD_F$ and of $\SSS\inv(\QQ_F)$, and the right adjoint $\TTr_F$  induces autoequivalences of $\QQ_F$ and of $\SSS(\DD_F)$.
\end{corollary}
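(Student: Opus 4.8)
The plan is to treat the twist functor $\TTT_F$ first, then its left adjoint $\TTl_F$, and finally obtain the claims about $\TTr_F$ by conjugating with the Serre functor, via the identity $\TTr_F=\SSS\,\TTl_F\,\SSS\inv$ which follows from Lemma~\ref{lem:adjoint_properties}(2) applied to $\TTT_F\colon\DD\to\DD$. The only ingredients needed are Lemma~\ref{lem:subcat-dicho}, parts~(1) and~(4) of Lemma~\ref{lem:spanningprop}, Theorem~\ref{thm:spherelike-CY}, and the two defining triangles $\TTl_F\to\id\to\Hom^\bullet(\blank,F)^*\otimes F$ and $\TTr_F\to\id\to\Hom^\bullet(F,\blank)\otimes\SSS(F)$.

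\emph{The functor $\TTT_F$.} Since $F\in\DD_F$ by Lemma~\ref{lem:Hom(F,Q)=0}, Lemma~\ref{lem:subcat-dicho} gives $\TTT_F(\DD_F)\subseteq\DD_F$. As a full triangulated subcategory of the algebraic, \proper\ category $\DD$, the subcategory $\DD_F$ is again algebraic and \proper, so the twist functor of $F$ formed inside $\DD_F$ exists and coincides with the restriction of $\TTT_F$. By Theorem~\ref{thm:spherelike-CY} the object $F$ is $d$-spherical in $\DD_F$, whence Lemma~\ref{lem:spanningprop}(4), read verbatim in $\DD_F$, shows $\TTT_F|_{\DD_F}$ is an equivalence. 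For $\QQ_F$, note $\QQ_F=\DD_F\orth\subseteq F\orth$ because $F\in\DD_F$; hence $\TTT_F|_{\QQ_F}=\id$ by Lemma~\ref{lem:spanningprop}(1), which is trivially an autoequivalence.

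\emph{The functor $\TTl_F$.} Evaluating the triangle $\TTl_F\to\id\to\Hom^\bullet(\blank,F)^*\otimes F$ at $A\in\DD_F$: the middle term is $A\in\DD_F$ and the right-hand term is a finite direct sum of shifts of $F\in\DD_F$, so both lie in $\DD_F$ and therefore $\TTl_F(A)\in\DD_F$. Thus $\TTl_F$ restricts to $\DD_F$, and restricting the adjunction exhibits $\TTl_F|_{\DD_F}$ as a left adjoint of the equivalence $\TTT_F|_{\DD_F}$; hence $\TTl_F|_{\DD_F}\cong(\TTT_F|_{\DD_F})\inv$ is an autoequivalence. For $\SSS\inv(\QQ_F)$: Serre duality yields $\SSS\inv(\NN\orth)={}\orth\NN$ for any subcategory $\NN$ (from $\Hom^\bullet(\omega,\SSS A)\cong\Hom^\bullet(A,\omega)^*$), so $\SSS\inv(\QQ_F)=\SSS\inv(\DD_F\orth)={}\orth\DD_F\subseteq{}\orth F$; on ${}\orth F$ the same triangle degenerates to $\TTl_F\cong\id$, so $\TTl_F$ restricts to the identity on $\SSS\inv(\QQ_F)$.

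\emph{The functor $\TTr_F$.} Conjugating the previous step by the autoequivalence $\SSS$ and using $\TTr_F=\SSS\,\TTl_F\,\SSS\inv$ gives that $\TTr_F$ induces autoequivalences of $\SSS(\DD_F)$ and of $\SSS(\SSS\inv(\QQ_F))=\QQ_F$. No step is genuinely hard; the points that need care are the orthogonality bookkeeping, in particular the identity $\SSS\inv(\NN\orth)={}\orth\NN$ and the ensuing asymmetry that $\TTr_F$ preserves $\SSS(\DD_F)$ rather than $\DD_F$ itself --- indeed $\SSS(F)=\omega(F)[d]\notin\DD_F$ as soon as $F$ is not spherical, since applying $\Hom^\bullet(\blank,Q_F)$ to the asphericality triangle $F\to\omega(F)\to Q_F$ and using $\Hom^\bullet(F,Q_F)=0$ gives $\Hom^\bullet(\omega(F),Q_F)\cong\Hom^\bullet(Q_F,Q_F)\neq0$ --- together with the routine but necessary observation, guaranteed by functoriality of the defining cone triangles, that the restrictions of $\TTT_F$, $\TTl_F$, $\TTr_F$ to these subcategories agree with the functors one builds from $F$ inside them.
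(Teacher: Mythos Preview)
Your proof is correct and follows the same overall architecture as the paper: handle $\TTT_F$ via Lemma~\ref{lem:subcat-dicho} and Lemma~\ref{lem:spanningprop}(4) combined with Theorem~\ref{thm:spherelike-CY}, then treat $\TTl_F$, then obtain $\TTr_F$ by the conjugation $\TTr_F=\SSS\,\TTl_F\,\SSS\inv$.

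The one substantive difference is your argument for $\TTl_F|_{\DD_F}$. The paper shows full faithfulness by testing on the spanning class $\{F\}\cup(F\orth\cap\DD_F)$ (noting $\TTl_F(F)=F[d-1]$ and $\TTl_F|_{{}\orth F\cap\DD_F}=\id$) and then infers essential surjectivity from $\TTT_F|_{\DD_F}$ being an equivalence. You instead first check that $\TTl_F$ preserves $\DD_F$ from its defining triangle, and then observe that the restricted adjunction $\TTl_F|_{\DD_F}\dashv\TTT_F|_{\DD_F}$ forces $\TTl_F|_{\DD_F}$ to be a quasi-inverse of the already-established equivalence. Your route is shorter and avoids invoking Proposition~\ref{prop:fully-faithful-criterion}; the paper's route has the minor advantage of making the action of $\TTl_F$ on the spanning class visible. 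You also spell out the case $\TTl_F|_{\SSS\inv(\QQ_F)}=\id$ via $\SSS\inv(\DD_F\orth)={}\orth\DD_F\subseteq{}\orth F$, which the paper leaves implicit but needs in order to deduce the $\QQ_F$-claim for $\TTr_F$ by conjugation.
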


\begin{proof}
By the assumptions on $\DD$, the twist functor $\TTT_F\colon\DD\to\DD$ is well-defined. Moreover,
$\TTT_F$ induces endofunctors of $\DD_F$ and $\QQ_F$ by Lemmata~\ref{lem:subcat-dicho} and \ref{lem:Hom(F,Q)=0}.
The restriction $\TTT_F|_{\DD_F}\colon\DD_F\to\DD_F$ is an autoequivalence by the theorem and Lemmata~\ref{lem:spanningprop}(4) and \ref{lem:subcat-dicho}.
Moreover, $\QQ_F=\DD_F\orth$ and $F\in\DD_F$ implies $\TTT_F|_{\QQ_F} = \id$. So $\TTT_F$ quite trivially is an autoequivalence of $\QQ_F$.

We turn to $\TTl_F$ which sits in the triangle $\TTl_F \to \id \to \Hom^\bullet(\blank,F)^* \otimes F$. Plugging an object $A\in\DD_F$ into this triangle and applying $\Hom^\bullet(\blank,Q_F)$, we find that $\TTl_F$ induces an endofunctor of $\DD_F$.
There it is still the left adjoint of the autoequivalence $\TTT_F|_{\DD_F}$, so $\TTl_F|_{\DD_F}$ is also an autoequivalence.
By the defining triangle, we see that $\TTl_F|_{\SSS\inv(\QQ_F)}$ is just the identity.

The statement about the right adjoint follows from $\TTr_F = \SSS \TTl_F \SSS\inv$.
\end{proof}

Now we prove that $\DD_F$ is indeed the maximal subcategory containing $F$ as a spherical object:

\begin{theorem} \label{thm:maximality}
Let $\UU\subset\DD$ be a full, triangulated subcategory and $F\in\UU$ a $d$-spherical object.
If $F$ has a Serre dual in $\DD$, then $\UU\subset\DD_F$.
\end{theorem}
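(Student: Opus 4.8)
The plan is to show that every object $U\in\UU$ lies in ${}\orth Q_F=\DD_F$, i.e.\ that $\Hom^\bullet(U,Q_F)=0$. The natural idea is to exploit the fact that $F$ is \emph{spherical in $\UU$}: this gives us, for all $U\in\UU$, a functorial isomorphism $\Hom^\bullet_\UU(F,U)\cong\Hom^\bullet_\UU(U,F[d])^*$, and since $\UU$ is a full subcategory, these $\Hom$-spaces agree with the ones computed in $\DD$. So $F$ is $d$-Calabi-Yau "relative to $\UU$'': $\Hom^\bullet(F,U)\cong\Hom^\bullet(U,F[d])^*$ whenever $U\in\UU$, even though $F$ is only spherelike (not spherical) in $\DD$.

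The second ingredient should be the asphericality triangle $F\xrightarrow{w}\omega(F)\to Q_F$ together with its Serre-duality origin. Recall $\omega(F)=\SSS(F)[-d]$, so for \emph{any} $A\in\DD$ one has $\Hom^\bullet(A,\omega(F))=\Hom^\bullet(A,\SSS(F)[-d])\cong\Hom^\bullet(F,A)^*[-d]$, and in particular $\Hom^\bullet(A,\omega(F))\cong\Hom^\bullet(F,A[d])^*$ by shifting. Now apply $\Hom^\bullet(U,\blank)$ to the asphericality triangle for an object $U\in\UU$. We get a long exact sequence
\[ \Hom^\bullet(U,F)\xrightarrow{w_*}\Hom^\bullet(U,\omega(F))\to\Hom^\bullet(U,Q_F)\to . \]
By the previous paragraph $\Hom^\bullet(U,\omega(F))\cong\Hom^\bullet(F,U[d])^*$, and by the spherical-in-$\UU$ property $\Hom^\bullet(F,U[d])^*\cong\Hom^\bullet(U,F)$ (shift $d$ in the Calabi-Yau isomorphism for $F\in\UU$). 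So the outer two terms are abstractly isomorphic; the key point will be to check that $w_*$ is precisely this isomorphism, i.e.\ that $w_*$ is an isomorphism, which then forces $\Hom^\bullet(U,Q_F)=0$ and hence $U\in{}\orth Q_F=\DD_F$ as required.

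The main obstacle is exactly that last verification: one must show $w_*\colon\Hom^\bullet(U,F)\to\Hom^\bullet(U,\omega(F))$ is an isomorphism, not merely that its source and target have equal dimension. The cleanest way is to trace through the definitions of the Serre-duality isomorphisms. The map $w\colon F\to\omega(F)=\SSS(F)[-d]$ corresponds, under $\Hom(F,\SSS(F))\cong\Hom(F,F)^*$ (shifting appropriately), to the trace/identity functional; composition with $w$ on the left then realises the pairing $\Hom^\bullet(U,F)\otimes\Hom^\bullet(F,\SSS F)\to\Hom^\bullet(U,\SSS F)$ followed by the canonical Serre map, which is exactly the non-degenerate pairing from the Serre-duality discussion in the Preliminaries. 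Since $F$ is Calabi-Yau in $\UU$, this pairing between $\Hom^\bullet(U,F)$ and (the relevant shift of) $\Hom^\bullet(F,U)$ is perfect, so $w_*$ is an isomorphism. This is the same computation that appeared in the proof of Lemma~\ref{lem:Hom(F,Q)=0} and in Theorem~\ref{thm:spherelike-CY}, only now with $U$ in place of an object already known to lie in $\DD_F$; the spherical-in-$\UU$ hypothesis is what makes it go through. As usual the case $d=0$ needs the separate treatment of $w$ from the Appendix, and should be deferred there.
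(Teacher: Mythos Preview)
Your overall strategy matches the paper's exactly: show $\Hom^\bullet(U,Q_F)=0$ for $U\in\UU$ by proving that $w_*\colon\Hom^\bullet(U,F)\to\Hom^\bullet(U,\omega(F))$ is an isomorphism, using that both sides are identified with $\Hom^\bullet(F,U[d])^*$ via $\DD$-Serre duality (for the target) and the $\UU$-Calabi-Yau property (for the source).

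However, there is a genuine gap at the crucial step. You assert that the pairing obtained from $w_*$ followed by $\DD$-Serre duality is perfect ``since $F$ is Calabi-Yau in $\UU$''. But the Calabi-Yau property of $F$ in $\UU$ gives an \emph{a priori different} pairing on $\Hom^\bullet(U,F)\times\Hom^\bullet(F,U[d])$: both pairings factor through composition into $\Hom(F[-d],F)$, but via two different functionals --- one coming from $\sigma_F^\DD(w)$, the other from the $\UU$-Serre structure. Knowing the second pairing is non-degenerate does not, on its own, tell you the first one is. For $d\neq 0$ you can rescue this by observing that $\Hom(F[-d],F)$ is one-dimensional, so any two nonzero functionals on it are proportional; but you do not say this, and for $d=0$ the space is two-dimensional and the argument genuinely breaks down (deferring to the Appendix does not help, since the Appendix constructs $w$ but does not supply the comparison you need here).

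The paper closes this gap with a Yoneda argument that works uniformly in $d$: it packages the composite of the two Serre isomorphisms as a natural isomorphism $g\colon h_{\omega(F)}|_\UU\isom h_F|_\UU$, so that $g\circ w_*|_\UU$ is a natural endomorphism of the \emph{representable} functor $h_F|_\UU$. By Yoneda this endomorphism is postcomposition with some $\varphi\in\End_\UU(F)$, and checking $\varphi$ is invertible reduces to the single case $U=F$, i.e.\ to Lemma~\ref{lem:Hom(F,Q)=0}. This is exactly the missing ingredient in your argument: rather than comparing the two pairings directly, one uses representability to localise the problem at $F$.
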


\begin{proof}
The statement is an extension of Lemma~\ref{lem:Hom(F,Q)=0}: here we want to show $\Hom^\bullet(U,Q)=0$ for all $U\in\UU$. This is equivalent to $\UU\subset\DD_F=\lorth Q_F$. 

The proof will use cohomological functors, i.e.\ contravariant functors from $\kk$-linear triangulated categories to $\kk$-vector spaces, mapping triangles to long exact sequences. For $D\in\DD$, we put 
 $h_D      \coloneqq  \Hom_\DD(\blank,D) \colon \DD\op \to \kk\hh\mod$, and
 $h_D|_\UU \coloneqq  \Hom_\DD(\blank,D) \colon \UU\op \to \kk\hh\mod$
for the induced functor on $\UU$.

Using the Serre duals of $F$ in $\DD$ and $\UU$, there are isomorphisms
\begin{align*}
  h_{\SSS F[-d]} = \Hom_\DD(\blank,\SSS F[-d]) &\cong \Hom_\DD(F[-d],\blank)^* ,\\
  h_F|_\UU      = \Hom_\UU(\blank,F) &\cong \Hom_\UU(F[-d],\blank)^* ,
\end{align*}
where the second line uses $F\in\UU$ and that $\UU$ is a full subcategory.
Hence we obtain an isomorphism of cohomological functors $g\colon h_{\SSS F[-d]}|_\UU\isom h_F|_\UU$.
In general, $h_{\SSS F[-d]}|_\UU$ is not representable as a functor on $\UU$. However, $h_F|_\UU$ is representable due to $F\in\UU$, allowing to invoke the Yoneda lemma:
\[ \Hom_{\text{Fun}}(h_F|_\UU,h_{\SSS F[-d]}|_\UU) = h_{\SSS F[-d]}|_\UU(F) = \Hom_\DD(F,\SSS F[-d]) . \]
The morphism $w\colon F\to\omega(F)=\SSS F[-d]$ induces a natural transformation $w_*\colon h_F\to h_{\SSS F[-d]}$. Our aim is to show that $w_*|_\UU\colon h_F|_\UU\to h_{\SSS F[-d]}|_\UU$ is a functor isomorphism --- assuming this is true, the induced triangles
 $\Hom^\bullet(U,F) \xrightarrow{w_*} \Hom^\bullet(U,\SSS F[-d]) \to \Hom^\bullet(U,Q_F)$
then immediately enforce $\Hom(U,Q_F)=0$ for all $U\in\UU$. 

Now, to show that $w_*|_\UU \colon h_F|_\UU \to h_{\SSS F[-d]}|_\UU$ is a functor isomorphism, we can equivalently check  $g\circ w_*|_\UU\colon h_F|_\UU \to h_F|_\UU$.
By the Yoneda lemma, latter is an isomorphism if and only if the corresponding map in $\Hom_\UU(F,F)$ is, which is given by $\left(g\circ w_*|_\UU\right)(F)(\id_F)$.
Unraveling this and using that $g$ is an isomorphism, we are left to show this for $w_*\colon \Hom(F,F) \to \Hom(F,\omega(F))$, which which was shown in the proof of Lemma~\ref{lem:Hom(F,Q)=0}.
\end{proof}

\subsection{Assumption $\boldsymbol{\DD=\sod{\CC\orth,\CC}}$ with $\boldsymbol{F\in\CC}$ spherical.}

Under some abstract assumptions, quite a bit can be said about the spherical subcategory. 
Here, we consider:

\assumption{\SOT}{
$F$ is $d$-spherical in a right admissible subcategory $ \CC \xxembed{\iota} \DD$ and $\DD$ has a Serre functor.
}

\noindent
We recall the simple fact that the right adjoint to an inclusion is a left inverse, i.e.\
$\CC\xxembed{\iota}\DD\xxrightarrow{\pi}\CC$ is the identity; see \cite[Rem.~1.24]{Huybrechts}.

Whenever \SOT\ holds, the following theorem allows to compute the spherical subcategory without recourse to the asphericity. This will be used in many of the examples. However, there are also interesting examples not of this type.

\begin{theorem} \label{thm:projection_functor}
Let $F\in\CC\subset\DD$ such that \SOT\ holds. Then $F$ is $d$-spherelike as an object of $\DD$, and the spherical subcategory has a weak semi-orthogonal decomposition 
 $\DD_F = \sod{\CC\orth\cap \lorth F, \; \CC}$.
\end{theorem}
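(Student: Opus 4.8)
The plan is to unwind the weak semi-orthogonal decomposition $\DD = \sod{\CC\orth,\CC}$ and combine it with the maximality theorem. First I would check that $F$ is $d$-spherelike in $\DD$: since $F\in\CC$ and $\CC$ is a full subcategory, $\Hom^\bullet_\DD(F,F)=\Hom^\bullet_\CC(F,F)=\kk\oplus\kk[-d]$ because $F$ is $d$-spherical (in particular $d$-spherelike) in $\CC$. Note also that $F$ has a Serre dual in $\DD$ by hypothesis (\SOT\ says $\DD$ has a Serre functor), so the asphericality triangle $F\xrightarrow{w}\omega(F)\to Q_F$ and the spherical subcategory $\DD_F={}\orth Q_F$ are defined.

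Next I would establish the inclusion ``$\supseteq$'': every object of $\sod{\CC\orth\cap{}\orth F,\;\CC}$ lies in $\DD_F$. For $\CC$ this is Theorem~\ref{thm:maximality} applied with $\UU=\CC$, since $F\in\CC$ is $d$-spherical and has a Serre dual in $\DD$; hence $\CC\subseteq\DD_F$. For $\CC\orth\cap{}\orth F$, I would apply $\Hom_\DD(A,\blank)$ to the asphericality triangle: if $A\in{}\orth F$ then $\Hom^\bullet(A,F)=0$, and it remains to see $\Hom^\bullet(A,\omega(F))=0$. Here $\omega(F)=\SSS F[-d]$ and, using that $\CC$ is right admissible with right Serre functor $\SSS_\CC=\pi\SSS_\DD|_\CC$ (Lemma~\ref{lem:adjoint_properties}(1)), the object $\SSS_\DD F$ lies in... —more carefully, I expect $\omega(F)\in\CC$ is \emph{not} automatic; instead I would use the defining triangle $A_\CC\to A\to A_{\CC\orth}$ of $A\in\DD$ and the fact that $A\in\CC\orth$ forces $A=A_{\CC\orth}$, then compute $\Hom^\bullet(A,\SSS_\DD F)$ via Serre duality in $\DD$ as $\Hom^\bullet(F,A[d])^*$, which vanishes since $A\in{}\orth F$ gives $\Hom^\bullet(A,F[d])=0$... wait, I need $\Hom^\bullet(F,A)$, not $\Hom^\bullet(A,F)$. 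So the correct route is: $A\in\CC\orth$ means $\Hom^\bullet(\CC,A)=0$, in particular $\Hom^\bullet(F,A)=0$, hence $\Hom^\bullet(A,\SSS_\DD F)=\Hom^\bullet(F,A)^*=0$; thus $\Hom^\bullet(A,\omega(F))=0$ and then $\Hom^\bullet(A,Q_F)=0$, i.e.\ $A\in\DD_F$. Triangulatedness of $\DD_F$ closes off this half.

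For the reverse inclusion ``$\subseteq$'', take $D\in\DD_F={}\orth Q_F$ and use the triangle $D_\CC\to D\to D_{\CC\orth}$. I would like to conclude $D_\CC\in\CC$ (trivially) and $D_{\CC\orth}\in\CC\orth\cap{}\orth F$. The first two terms give $D_{\CC\orth}\in\CC\orth$ for free; the point is $D_{\CC\orth}\in{}\orth F$, i.e.\ $\Hom^\bullet(D_{\CC\orth},F)=0$. Since $\DD_F$ is triangulated and contains $D$ and $D_\CC$ (the latter because $\CC\subseteq\DD_F$), it contains $D_{\CC\orth}$, so $\Hom^\bullet(D_{\CC\orth},Q_F)=0$. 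Now apply $\Hom^\bullet(D_{\CC\orth},\blank)$ to the asphericality triangle: $\Hom^\bullet(D_{\CC\orth},F)\xrightarrow{\sim}\Hom^\bullet(D_{\CC\orth},\omega(F))$. But $\omega(F)=\SSS_\DD F[-d]$ and $\Hom^\bullet(D_{\CC\orth},\SSS_\DD F)=\Hom^\bullet(F,D_{\CC\orth})^*=0$ because $F\in\CC$ and $D_{\CC\orth}\in\CC\orth$. Hence $\Hom^\bullet(D_{\CC\orth},F)=0$, giving $D_{\CC\orth}\in{}\orth F$. This exhibits $D$ inside $\sod{\CC\orth\cap{}\orth F,\;\CC}$, and it also shows the pair is a weak semi-orthogonal decomposition of $\DD_F$: one checks $\CC\orth\cap{}\orth F = \CC\orth\cap\DD_F$ is precisely the right orthogonal (within $\DD_F$) of $\CC$ — for $A$ in this category and $C\in\CC$, $\Hom^\bullet(C,A)=0$ by $A\in\CC\orth$ — and right admissibility of $\CC$ in $\DD_F$ is inherited from right admissibility in $\DD$ together with the already-proved fact that the triangle $D_\CC\to D\to D_{\CC\orth}$ stays inside $\DD_F$.

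The main obstacle I anticipate is the bookkeeping around \emph{which} orthogonality direction appears where: the asphericality lives in $F\orth\setminus{}\orth F$ (Remark~\ref{rmk:Q-not-left-orthogonal}), the spherical subcategory is a \emph{left} orthogonal ${}\orth Q_F$, while the semi-orthogonal decomposition involves the \emph{right} orthogonal $\CC\orth$ intersected with the \emph{left} orthogonal ${}\orth F$; keeping the Serre-duality flips straight (so that each vanishing $\Hom^\bullet(\text{orthogonal object}, F)$ or $\Hom^\bullet(F,\text{orthogonal object})$ is invoked in the form actually available) is the only place where an error could creep in. Everything else is a routine application of the truncation triangles for the admissible subcategory $\CC$ and the fact, already proved, that $\DD_F$ is triangulated and contains $\CC$.
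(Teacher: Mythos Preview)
Your proposal is correct and follows essentially the same route as the paper: invoke Theorem~\ref{thm:maximality} for $\CC\subseteq\DD_F$, and for any $A\in\CC\orth$ use Serre duality in $\DD$ to get $\Hom^\bullet(A,\omega(F))=\Hom^\bullet(F,A)^*[-d]=0$, then read off from the asphericality triangle that $\Hom^\bullet(A,F)$ and $\Hom^\bullet(A,Q_F)$ vanish simultaneously. The only difference is organisational: the paper packages this last observation as the single equality $\CC\orth\cap{}\orth Q_F=\CC\orth\cap{}\orth F$ and then intersects $\DD=\sod{\CC\orth,\CC}$ with ${}\orth Q_F$ in one stroke, whereas you run the same computation twice, once for each inclusion.
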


\begin{proof}
It is immediate that $\iota F$ is $d$-spherelike in $\DD$. 
We get $\CC\subset\DD_{\iota} F=\lorth Q_{\iota} F$ from Theorem~\ref{thm:maximality}. 

For $A\in\CC\orth$, we have
 $\Hom^\bullet_\DD(A,\SSS_\DD\iota F[-d])=\Hom^\bullet_\DD(\iota F[-d],A)^*=0$, as $F\in\CC$. Applying $\Hom^\bullet_\DD(\iota A,\blank)$ to the asphericity triangle of $\iota F$ then shows
 $\CC\orth\cap \lorth Q_{\iota F}=\CC\orth\cap \lorth \iota F$.

Now, any object $A \in \DD_{\iota F} \subset \DD$ has a decomposition $A_\CC \to A \to A_\perp$ with $A_\CC \in \CC$ and $A_\perp \in \CC\orth$. 
As shown above $A_\perp \in \CC\orth\cap \lorth \iota F$,
so $\DD_{\iota F} = \sod{\CC\orth\cap \lorth \iota F,\; \CC}$
is a weak semi-orthogonal decomposition.
This is the formula of the theorem, where by abuse of notation we have identified $F$ with $\iota F$ as an object of $\DD$.
\end{proof}

\begin{remark}
\label{rem:0sph-dec}
Let $F = E \oplus E'$ be the 0-spherelike object obtained from two mutually orthogonal, exceptional objects $E$ and $E'$. Then $F\in\CC \coloneqq  \sod{E,E'}$ and the inclusion $\CC \xxembed{\iota} \DD$ has right adjoint $\pi \coloneqq  \TTT_E \oplus \TTT_{E'}$.
Since $\pi \iota=\id_\CC$, we can apply the proposition and get $\DD_F = \sod{\orth F \cap F\orth,\sod{F}}$.

Obviously, $\DD_F \supset \sod{\orth F\cap F\orth,\sod{F}}$ holds for all spherelike objects $F$.
However, the inclusion is strict in general. A simple example is given by the 1-spherical skyscraper sheaf $\kk(p)$ for a point $p$ on a smooth curve $C$. Then $\DD_{\kk(p)} = \Db(C)$ as $\kk(p)$ is spherical, but $\sod{\orth \kk(p) \cap \kk(p)\orth,\; \kk(p)}$ only contains objects with zero-dimensional support; see Example~\ref{ex:pulledback-ruled}.
\end{remark}

\subsection{Assumption $\boldsymbol{\DD = \sod{\QQ_F,\DD_F}}$}

We consider this condition:

\assumption{\SOD}{
$\DD$ has a Serre functor and $\DD_F\embed\DD$ is right admissible.
}

\noindent
As a direct consequence, we get a weak semi-orthogonal decomposition $\DD = \sod{\QQ_F,\DD_F}$.
Furthermore, \SOD\ gives semi-orthogonal decompositions $\sod{\DD_F,\SSS\inv(\QQ_F)}$ and $\sod{\SSS(\DD_F),\QQ_F}$ of $\DD$; therefore Lemma~\ref{lem:piecewise-invertible-is-conservative}, Theorem~\ref{thm:spherelike-CY} and Corollary~\ref{cor:twist_adjoints} immediately imply

\begin{proposition}
Assume \SOD\ and that $\DD$ is algebraic, idempotent complete and $\Hom^\bullet$-finite. Let $F\in\DD$ be a $d$-spherelike object. Then the twist functor $\TTT_F$ and its adjoints are piecewise invertible and in particular conservative.
\end{proposition}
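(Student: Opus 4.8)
The plan is to unpack the three displayed facts that the statement says ``immediately imply'' the proposition, and assemble them. First I would observe that \SOD\ by definition gives that $\DD_F\embed\DD$ is right admissible with $\QQ_F=\DD_F\orth$, so that $\DD=\sod{\QQ_F,\DD_F}$ is a weak semi-orthogonal decomposition; moreover, since $\DD$ has a Serre functor, applying $\SSS\inv$ and $\SSS$ to this decomposition produces the two further weak (in fact full) semi-orthogonal decompositions $\DD=\sod{\DD_F,\SSS\inv(\QQ_F)}$ and $\DD=\sod{\SSS(\DD_F),\QQ_F}$ mentioned just before the proposition. (Here one uses that a Serre functor is an autoequivalence, hence carries admissible subcategories to admissible subcategories and orthogonals to orthogonals.)

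Next I would recall the relevant outputs of the earlier results. Theorem~\ref{thm:spherelike-CY} says $F$ is $d$-spherical in $\DD_F$; Corollary~\ref{cor:twist_adjoints} (which applies because $\DD$ is \proper\ and algebraic, hence $\TTT_F$ exists, and because $\DD$ has a Serre functor) says $\TTT_F$ restricts to an autoequivalence of $\DD_F$, that $\TTT_F|_{\QQ_F}=\id$, and that the left adjoint $\TTl_F$ restricts to autoequivalences of $\DD_F$ and of $\SSS\inv(\QQ_F)$, while the right adjoint $\TTr_F$ restricts to autoequivalences of $\QQ_F$ and of $\SSS(\DD_F)$. It remains only to check that each of $\TTT_F$, $\TTl_F$, $\TTr_F$ sends the pieces of one of the three decompositions into the corresponding pieces — i.e.\ is piecewise invertible in the sense of Definition~\ref{defn:functorproperties} — and then to quote Lemma~\ref{lem:piecewise-invertible-is-conservative} to conclude conservativity.

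Concretely: for $\TTT_F$ use $\DD=\sod{\QQ_F,\DD_F}$, noting $\TTT_F(\QQ_F)\subseteq\QQ_F$ (it is the identity there) and $\TTT_F(\DD_F)\subseteq\DD_F$ with the restriction an equivalence, by Corollary~\ref{cor:twist_adjoints}. For $\TTl_F$ use $\DD=\sod{\DD_F,\SSS\inv(\QQ_F)}$: on the first component $\TTl_F$ is an autoequivalence of $\DD_F$, and on the second it is an autoequivalence of $\SSS\inv(\QQ_F)$, both again from Corollary~\ref{cor:twist_adjoints}. Dually, for $\TTr_F$ use $\DD=\sod{\SSS(\DD_F),\QQ_F}$. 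Thus all three functors are piecewise invertible, hence conservative by Lemma~\ref{lem:piecewise-invertible-is-conservative}.

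I do not expect a genuine obstacle here, as the proposition is flagged as an immediate consequence; the only point needing a line of care is the bookkeeping that the semi-orthogonal decompositions are oriented so that the relevant functor maps the $i$-th piece into the $i$-th piece (and not, say, that one must reverse the order of the components), together with the observation that $\SSS$ being an autoequivalence is what upgrades ``right admissible'' to the admissibility required in the definition of a semi-orthogonal decomposition and hence of piecewise invertibility. One should also note in passing that although the composition of piecewise invertible functors need not be piecewise invertible, conservativity \emph{is} preserved under composition, which is consistent with the claim that the adjoints (which are quasi-inverse to $\TTT_F$ on the large pieces) are conservative.
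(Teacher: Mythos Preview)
Your proposal is correct and follows exactly the route the paper indicates: the paper does not spell out a proof but merely notes that \SOD\ yields the two additional decompositions $\sod{\DD_F,\SSS\inv(\QQ_F)}$ and $\sod{\SSS(\DD_F),\QQ_F}$ and then cites Lemma~\ref{lem:piecewise-invertible-is-conservative}, Theorem~\ref{thm:spherelike-CY} and Corollary~\ref{cor:twist_adjoints}. Your write-up simply fills in this bookkeeping; the only superfluous remark is the one about upgrading to full admissibility, since the definition of piecewise invertibility only requires \emph{weak} semi-orthogonal decompositions.
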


\section{Examples from algebraic geometry} \label{sec:AG}

\noindent
We will always work with smooth, projective varieties over an algebraically closed field $\kk$ and the triangulated category under investigation will be the bounded derived category of coherent sheaves. As is well known, classical Serre duality shows that $\SSS A\coloneqq A\otimes\omega_X[\dim(X)]$ is the Serre functor of $\Db(X)$. Note that $d$-Calabi-Yau objects of $\Db(X)$ must have $d=\dim(X)$. This is why among $d$-spherelike objects of $\Db(X)$ those with $d=\dim(X)$ are particularly interesting and we have $\omega(A)=\SSS A[-d]=\omega_X\otimes A$ for such an object $A$, justifying the notation.

\subsection{Spherelike vector bundles} \label{sec:spherelike-vb}

Let $V$ be a $d$-spherelike locally free sheaf on a variety of dimension $n$. Assuming that $\kk$ has characteristic 0, the endomorphism bundle splits
 $ V\dual \otimes V \cong \OO_X \oplus W $
where $W$ is locally free and self dual, i.e.\ $W\dual \cong W$. We get
\[ \kk\oplus\kk[-d] = \Hom^\bullet(V,V) \cong \Hom^\bullet(\OO_X,V\dual\otimes V) \cong
   H^\bullet(\OO_X) \oplus H^\bullet(W) .\]
Since $H^0(\OO_X)=\kk$ in any case, there are two possibilities:
\begin{itemize}
\item Either $H^\bullet(W)=\kk[-d]$ and $H^\bullet(\OO_X)=\kk$, i.e.\ $\OO_X$ is exceptional,
\item or $H^\bullet(W)=0$ and $H^\bullet(\OO_X)=\kk\oplus\kk[-d]$, i.e.\ $\OO_X$ is $d$-spherelike.
\end{itemize}
Let us restrict to dimension 2. We are therefore interested in surfaces with exceptional or spherelike structure sheaf. Below, we compile a list of those, assuming $\chara(\kk)=0$. Recall that the irregularity $q\coloneqq \dim H^1(\OO_X)$ and the geometric genus $p_g\coloneqq \dim H^2(\OO_X)$ are birational invariants, as is the Kodaira dimension $\kappa$. See \cite{Barth-etal} for these notions and the following list. 
For Kynev surfaces, see \cite{Kynev} or \cite{Park-Park-Shin}.


\medskip
\begin{center}
\noindent
\begin{tabular}{@{}lcp{0.47\textwidth}@{}} \toprule
                                      & $\kappa$  & minimal model (or example) \\ \midrule
$\OO_X$ exceptional $(q=p_g=0)$       & $-\infty$ & rational surfaces \\
                                      & 0         & Enriques surfaces \\
                                      & 1         & (e.g.\ Dolgachev surfaces) \\
                                      & 2         & (e.g.\ Barlow, Burniat, Campedelli, Catanese, Godeaux surfaces) \\ \midrule
$\OO_X$ 1-spherelike $(q=1, p_g=0)$   & $-\infty$ & ruled surfaces of genus 1 \\
                                      & 0         & bielliptic surfaces \\ \midrule
$\OO_X$ 2-spherelike $(q=0, p_g=1)$   & 0         & K3 surfaces \\ 
                                      & 1         & (see below) \\
                                      & 2         & (e.g.\ Kynev surface) \\ \bottomrule
\end{tabular}
\end{center}
\medskip

\noindent
Wherever we write `e.g.' only examples are known and a full classification is not available; those surfaces need not be minimal.
For an example with invariants $\kappa=1, q=0, p_g=1$, see \cite[Ex.~3.3]{Peters}. It is constructed as a double logarithmic transform of a minimal elliptic fibration over $\IP^1$. However, it is not clear whether there are projective examples. See also \cite[Ex.~V.13.2]{Barth-etal}.

We treat structure sheaves of ruled surfaces over elliptic curves in Example~\ref{ex:structure-sheaf-ruled-surface}, 
and $2$-spherelike structure sheaves in Proposition~\ref{prop:structure-sheaf-spherelike}

\subsection{Blowing ups} \label{sec:blowup}

Let $X$ be a variety of dimension $d\geq2$ and $\pi\colon \tX \to X$ the blow-up of $X$ in a point $p$. Denote the exceptional divisor by $R$; we know $R\cong\IP^{d-1}$ and $\OO_R(R)\cong\OO_R(-1)$.
Recall that the derived pullback functor $\pi^*\colon \Db(X)\to\Db(\tX)$ is fully faithful and that the canonical bundle of the blow-up is given by $\omega_\tX=\pi^*\omega_X\otimes\OO_\tX(d'R)$ where for notational purposes we set $d'\coloneqq d-1$ in this section.

The derived category of $\Db(\tX)$ has a semi-orthogonal decomposition
\[ \Db(\tX) = \bigsod{ \OO_R(-d'), \ldots, \OO_R(-1), \pi^* \Db(X)} , \]
where we note that $\OO_R(-d'),\ldots,\OO_R(-1)$ is an exceptional sequence; see \cite[\S11.2]{Huybrechts}. Let $S\in\Db(X)$ be a spherical object and $F \coloneqq \pi^*S \in \Db(\tX)$ its $d$-spherelike pull-back. Assumption \SOT\ holds, so that Theorem~\ref{thm:projection_functor} applies.

\begin{lemma} \label{lem:blowup-asphericity}
Let $S\in\Db(X)$ be spherical and $F=\pi^*S\in\Db(\tX)$ its spherelike pull-back. Then $F$ has asphericity $Q_F=F\otimes\OO_{d'R}(d'R)$. Furthermore, $F$ is spherical if and only if $p\notin\supp(S)$.
\end{lemma}

\begin{proof}
First, assume $p\notin\supp(S)$. Then, $F\otimes\omega_\tX\cong F$, since $\supp(F)\cap R=\varnothing$ and $\OO_\tX(d'R)$ is trivial off $R$.

Now we turn to the asphericity $Q$. We can assume $p\in\supp(S)$ --- otherwise, $Q=0$, in compliance with the claimed formula. Now observe
 $F \otimes \omega_\tX = \pi^*S \otimes \pi^*\omega_X \otimes \OO_\tX(d'R) 
                       = F \otimes \OO_\tX(d'R) ,$
using the formula for $\omega_\tX$ and the Calabi-Yau property $S\otimes\omega_X=S$.
Tensoring the exact sequence
 $0\to \OO_\tX \to \OO_\tX(d'R) \to \OO_{d'R}(d'R) \to 0$
with $F$ gives the triangle
\[ F \to F\otimes \OO_\tX(d'R) \to F\otimes\OO_{d'R}(d'R) \]
where we recall that the tensor product of the last term is derived. Note that the first map must be non-zero --- otherwise $F\otimes\OO_{d'R}(d'R)$ would be a direct sum $F[1] \oplus F\otimes\OO_\tX(d'R)$, contradicting that $F\otimes\OO_{d'R}(d'R)$ is supported on $R$ but the support of $F=\pi^*S$ is strictly bigger than $R$: spherical objects are not supported on points if $d\geq2$.
As $\Hom(F,\omega(F))=\Hom(F,F\otimes\OO_\tX(d'R))$ is one-dimensional, the above triangle is therefore isomorphic to the triangle defining the asphericity, $F\to\omega(F)\to Q_F$.

Finally, we show that $F$ spherical implies $p\notin\supp(S)$. Thus we have $0 = Q_F = F\otimes\OO_{d'R}(d'R)$ and then $0=F\otimes\OO_{d'R}=\pi^*S\otimes\OO_{d'R}$. Applying $\pi_*$ and the projection formula, we get $0=S\otimes\pi_*\OO_{d'R}$. Now $\pi_*\OO_{d'R}$ is supported on $p$, and is non-zero (the sheaves $\OO_{iR}$ have global sections for all $i\geq0$ since $\OO_R(-iR)=\OO_R(i)$ do). Therefore, $0=S\otimes\pi_*\OO_{d'R}$ implies $p\notin\supp(S)$.
\end{proof}

\begin{proposition} \label{prop:sod-equality}
Let $\pi\colon\tX\to X$ be the blowing up of a smooth projective variety of dimension $d$ in a point $p$. If $S\in\Db(X)$ is a spherical object with $p\in\supp(S)$, then for $F\coloneqq\pi^*S$ assumption \SOD\ holds true and, moreover
there is a refinement of semi-orthogonal decompositions
\[ \bigsod{\OO_R(-(d-1)),\ldots,\OO_R(-1),\pi^*\Db(X)} \prec \bigsod{\QQ_F,\DD_F} \]
with $\DD_F = \pi^*\Db(X)$ and $\QQ_F = \sod{\OO_R(-(d-1)),\ldots,\OO_R(-1)}$.
\end{proposition}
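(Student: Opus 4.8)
The plan is to derive the whole statement from Theorem~\ref{thm:projection_functor}. First I would verify assumption~\SOT\ for the subcategory $\CC\coloneqq\pi^{*}\Db(X)$: the category $\Db(\tX)$ has a Serre functor; by the blow-up semi-orthogonal decomposition recalled above $\CC$ is admissible in $\Db(\tX)$, so the inclusion $\CC\embed\Db(\tX)$ has a right adjoint; and $F=\pi^{*}S$ is $d$-spherical in $\CC$ because $\pi^{*}\colon\Db(X)\isom\CC$ is an equivalence and sphericality is invariant under equivalences. The same decomposition, read in the paper's convention for $\sod{-,-}$, identifies $\CC\orth$ with $\sod{\OO_R(-(d-1)),\ldots,\OO_R(-1)}$. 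Theorem~\ref{thm:projection_functor} then produces a weak semi-orthogonal decomposition $\DD_F=\sod{\CC\orth\cap\lorth F,\ \CC}$.

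The proposition now reduces to the single vanishing $\CC\orth\cap\lorth F=0$. Granting it: $\DD_F=\CC=\pi^{*}\Db(X)$; hence $\QQ_F=\DD_F\orth=\CC\orth=\sod{\OO_R(-(d-1)),\ldots,\OO_R(-1)}$; assumption~\SOD\ holds because $\DD_F=\pi^{*}\Db(X)$ is admissible in $\Db(\tX)$; and the asserted refinement is just the blow-up decomposition with its first $d-1$ terms grouped into $\QQ_F$.

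To prove $\CC\orth\cap\lorth F=0$ I would pass from $F$ to its asphericality. As in the proof of Theorem~\ref{thm:projection_functor}, applying $\Hom^{\bullet}(A,\blank)$ to the asphericality triangle and using $\Hom^{\bullet}(A,\omega(F))=0$ for $A\in\CC\orth$ gives $\CC\orth\cap\lorth F=\CC\orth\cap\lorth{Q_F}$; and by Lemma~\ref{lem:blowup-asphericality}, since $p\in\supp(S)$, $Q_F=F\otimes\OO_{(d-1)R}((d-1)R)$. The sheaf $\OO_{(d-1)R}((d-1)R)$ has a filtration with graded pieces $\OO_R(-1),\ldots,\OO_R(-(d-1))$ (from the filtration of $\OO_{(d-1)R}$ by powers of the ideal of $R$, together with $\OO_\tX(R)|_R\cong\OO_R(-1)$), so tensoring with $\pi^{*}S$ shows that $Q_F$ has a filtration with graded pieces $V\otimes\OO_R(-j)$, $j=1,\ldots,d-1$, where $V\coloneqq i_p^{*}S\neq0$ is the derived restriction of $S$ to $p$. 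In particular $Q_F\in\sod{\OO_R(-(d-1)),\ldots,\OO_R(-1)}=\CC\orth$, which re-proves $\CC\subseteq\DD_F$; the content is that no nonzero object of $\CC\orth$ is left-orthogonal to $Q_F$.

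I expect this last point to be the main obstacle. In the special case where $Q_F$ split-generates $\CC\orth$ it is immediate, since then $\lorth{Q_F}=\lorth{(\CC\orth)}$ and $\lorth{(\CC\orth)}\cap\CC\orth=0$; this covers $d=2$ (where $\CC\orth=\sod{\OO_R(-1)}$ and $Q_F\cong V\otimes\OO_R(-1)$) and, more generally, the case where $Q_F$ is exceptional. For general $d$ I would transport the computation to $R\cong\IP^{d-1}$: writing $j\colon R\embed\tX$, the adjunction $j_{*}\dashv j^{!}$ together with $j^{!}\OO_\tX\cong\OO_R(-1)[-1]$ and $j^{*}\pi^{*}S\cong V\otimes\OO_R$ identifies $\Hom^{\bullet}_{\tX}(j_{*}\mathcal G,F)\cong\Hom^{\bullet}_{\IP^{d-1}}(\mathcal G,\OO(-1))\otimes_{\kk}V\,[-1]$ for $\mathcal G\in\Db(\IP^{d-1})$; in particular $\Hom^{\bullet}_{\tX}(\OO_R(-i),F)\neq0$ for every $i=1,\ldots,d-1$, so no \emph{generator} of $\CC\orth$ is left-orthogonal to $F$. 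Upgrading this to the vanishing of the whole thick subcategory $\lorth F\cap\CC\orth$ is the delicate part; I would expect it to use the left and right dual exceptional collections of $\sod{\OO_R(-(d-1)),\ldots,\OO_R(-1)}$ and the functorial Postnikov filtrations they induce, and that is where I would concentrate the effort.
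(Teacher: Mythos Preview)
Your plan coincides with the paper's: verify \SOT\ for $\CC=\pi^*\Db(X)$, invoke Theorem~\ref{thm:projection_functor} to obtain $\DD_F=\sod{\CC\orth\cap\lorth F,\CC}$, and then argue that $\CC\orth\cap\lorth F=0$. Two differences in execution are worth noting. First, the paper never passes through the asphericality: it does not use Lemma~\ref{lem:blowup-asphericality}, nor the filtration of $Q_F$, nor the identity $\CC\orth\cap\lorth F=\CC\orth\cap\lorth{Q_F}$. That whole paragraph of yours is correct but unused. Second, the paper computes $\Hom^\bullet(\OO_R(-i),F)$ not via $j^{!}$ but via Serre duality on $\tX$, the adjunction $\pi^*\dashv\pi_*$, and the Calabi--Yau property $S\otimes\omega_X\cong S$, arriving at
\[
\Hom^\bullet(\OO_R(-i),F)\cong\Hom^\bullet\bigl(S,\kk(p)\bigr)^*\otimes H^\bullet\bigl(\OO_{\IP^{d-1}}(-i-(d-1))\bigr)^*[-d]\neq0.
\]
Your $j^{!}$-computation reaches the same non-vanishing; either route works.

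The step you flag as ``the delicate part'' is exactly where the paper is most terse. Having shown $\OO_R(-i)\notin\lorth F$ for each $i$, the paper writes: ``Since $\EE$ is generated by an exceptional sequence, we are reduced to showing $\OO_R(-i)\notin\lorth F$ for $i=1,\ldots,d-1$,'' and offers nothing further. You are right to be uneasy: for an arbitrary exceptional sequence $(E_1,\ldots,E_n)$ and an arbitrary object $F$, the implications $E_i\notin\lorth F$ for all $i$ do \emph{not} force $\sod{E_1,\ldots,E_n}\cap\lorth F=0$ (extensions of the $E_i$ may well land in $\lorth F$). So the reduction is not a general principle, and the paper does not justify it in this instance. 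Your suggestion to use dual collections and Postnikov filtrations is one reasonable line; another, closer to the paper's computation, is to observe that the same Serre-duality/adjunction manipulation applied to an arbitrary $A\in\EE$ gives $\Hom^\bullet(A,F)\cong\Hom^\bullet(S,\kk(p))^*\otimes\pi_*(A\otimes\omega_{\tX/X})^*[-d]$, so that $\EE\cap\lorth F$ equals $\EE\cap\ker\bigl(\pi_*(-\otimes\omega_{\tX/X})\bigr)=\EE\cap\lorth\CC$, the intersection of the right and left orthogonals of $\CC$ in $\Db(\tX)$; one then has to check this vanishes. In any case, on this particular point the paper's written proof is no more complete than your proposal.
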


\begin{proof}
Again we put $d'=d-1$ for the sake of readability. As another temporary notation, put $\EE\coloneqq \sod{\OO_R(-d'),\ldots,\OO_R(-1)}$, so that $\Db(\tX)=\sod{\EE,\pi^*\Db(X)}$.
From Theorem~\ref{thm:projection_functor}, we know 
 $\DD_F = \sod{\EE\cap \lorth F,\pi^*\Db(X)}$.
Our goal is to prove $\EE\cap \lorth F=0$. Since $\EE$ is generated by an exceptional sequence, we are reduced to showing $\OO_R(-i)\notin\lorth F$ for $i=1,\ldots,d'$. 

Fix such an $i$ and proceed
\begin{align*}
   \Hom^\bullet(\OO_R(-i),F) 
&= \Hom^\bullet(F,\OO_R(-i)\otimes\omega_\tX[d])^* \\
&= \Hom^\bullet(\pi^*S,\OO_R(-i)\otimes\pi^*\omega_X\otimes\OO_\tX(d'R))^*[-d] \\
&= \Hom^\bullet(\pi^*(S\otimes\omega_X\inv),\OO_R(-i)\otimes\OO_\tX(d'R))^*[-d] \\
&= \Hom^\bullet(\pi^*S,\OO_R(-i-d'))^*[-d] \\
&= \Hom^\bullet(S,\pi_*\OO_R(-i-d'))^*[-d] \\
&= \Hom^\bullet(S,\kk(p)\otimes H^\bullet(\OO_R(-i-d')))^*[-d] ,
\end{align*}
using Serre duality, the formula for $\omega_\tX$, the Calabi-Yau property of $S$, the relation $\OO_R(R)=\OO_R(-1)$ and adjunction $\pi^*\dashv\pi_*$. About the equality used in the closing step, $\pi_*\OO_R(-i-d') = \kk(p) \otimes H^\bullet(\OO_R(-i-d'))$: the two maps $R\embed\tX\xxrightarrow{\pi} X$ and $R\to\{p\}\embed X$ coincide, and so give a commutativity relation of direct image functors. The cohomology is non-zero due to $H^{d'}(\OO_{\IP^{d'}}(-i-d'))\neq0$, for $i>0$.

With $S$ supported on $p$, i.e.\ $\Hom^\bullet(S,\kk(p))\neq0$, we finally assemble these pieces into the desired non-orthogonality $\Hom^\bullet(\OO_R(-i),F)\neq0$.
\end{proof}

The proposition can be extended inductively. 

\begin{corollary}
Let $\tX = X_l \xrightarrow{\pi_l} \cdots \xrightarrow{\pi_1} X_0 = X$ be a sequence of blowups in points of a smooth projective variety of dimension $d$. If $S \in \Db(X)$ is spherical and the blowups happen in the support of (the pullback of) $S$, then $\Db_F = \pi^* \Db(X)$ where $\pi$ is the concatenation of the blowups and $F = \pi^* S$. Moreover, \SOD\  holds in this situation.
\end{corollary}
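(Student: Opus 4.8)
The plan is to mimic the proof of Proposition~\ref{prop:sod-equality}, with the single blow-up replaced by the whole tower; equivalently one may run an induction on $l$ whose base case $l=1$ is that proposition. Write $\pi\colon\tX\to X$ for the concatenation of all $\pi_j$ and, for $0\le k\le l$, write $\sigma_k\colon\tX\to X_k$ and $\tau_k\colon X_k\to X$ for the evident compositions, so that $\pi=\tau_k\circ\sigma_k$, $\tau_k=\tau_{k-1}\circ\pi_k$, and $F=\pi^*S=\sigma_k^*(\tau_k^*S)$. Pull-back along a blow-up is fully faithful, hence so are $\sigma_k^*$, $\tau_k^*$ and $\pi^*$. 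Iterating the blow-up decomposition $\Db(X_k)=\sod{\EE_k,\pi_k^*\Db(X_{k-1})}$, where $\EE_k\coloneqq\sod{\OO_{R_k}(-(d-1)),\ldots,\OO_{R_k}(-1)}$, and using that a fully faithful functor carries semi-orthogonal decompositions to semi-orthogonal decompositions of its image, one obtains
\[ \Db(\tX)=\bigsod{\,\sigma_l^*\EE_l,\;\sigma_{l-1}^*\EE_{l-1},\;\ldots,\;\sigma_1^*\EE_1,\;\pi^*\Db(X)\,}. \]
Thus, setting $\tilde\EE\coloneqq\sod{\sigma_l^*\EE_l,\ldots,\sigma_1^*\EE_1}$, the objects $\sigma_k^*\OO_{R_k}(-i)$ ($1\le k\le l$, $1\le i\le d-1$) form a full exceptional sequence generating $\tilde\EE$, and $(\pi^*\Db(X))\orth=\tilde\EE$. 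Since $\pi^*$ restricts to an equivalence $\Db(X)\isom\pi^*\Db(X)$ sending the spherical $S$ to $F$, and being $d$-spherical is intrinsic, $F$ is $d$-spherical in $\pi^*\Db(X)$; moreover $\pi^*\Db(X)$ is right admissible in $\Db(\tX)$ with right adjoint the derived push-forward $\pi_*$. Hence assumption~\SOT\ holds with $\CC=\pi^*\Db(X)$, and Theorem~\ref{thm:projection_functor} shows that $F$ is $d$-spherelike in $\Db(\tX)$ and
\[ \Db(\tX)_F=\bigsod{\,\tilde\EE\cap\lorth F,\;\pi^*\Db(X)\,}. \]

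It remains to prove $\tilde\EE\cap\lorth F=0$; here I would argue as in Proposition~\ref{prop:sod-equality}. Since $\tilde\EE$ is generated by the above exceptional sequence, it suffices to show that no $\sigma_k^*\OO_{R_k}(-i)$ lies in $\lorth F$. Fixing $k$ and $i$, full faithfulness of $\sigma_k^*$ and $F=\sigma_k^*(\tau_k^*S)$ reduce the computation of $\Hom^\bullet_{\tX}(\sigma_k^*\OO_{R_k}(-i),F)$ to $\Hom^\bullet_{X_k}(\OO_{R_k}(-i),\tau_k^*S)$ on $X_k$, and one now repeats the Serre-duality computation of Proposition~\ref{prop:sod-equality} for the single blow-up $\pi_k\colon X_k\to X_{k-1}$ and the object $\tau_{k-1}^*S$ on $X_{k-1}$ (whose $\pi_k$-pull-back is $\tau_k^*S$). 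The relevant facts are: $\omega_{X_k}=\pi_k^*\omega_{X_{k-1}}\otimes\OO_{X_k}((d-1)R_k)$; $\pi_k^*\omega_{X_{k-1}}$ restricts trivially to $R_k$, since $\pi_k$ contracts $R_k$ to $p_k$ --- so that $\OO_{R_k}(-i)\otimes\omega_{X_k}\cong\OO_{R_k}(-i-(d-1))$ and no Calabi-Yau property of $\tau_{k-1}^*S$ is used; and $(\pi_k)_*\OO_{R_k}(-i-(d-1))=\kk(p_k)\otimes H^{d-1}(\OO_{\IP^{d-1}}(-i-(d-1)))[-(d-1)]$, a non-zero shifted skyscraper for $1\le i\le d-1$. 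This yields, up to shift,
\[ \Hom^\bullet_{\tX}(\sigma_k^*\OO_{R_k}(-i),F)\cong\Hom^\bullet_{X_{k-1}}(\tau_{k-1}^*S,\kk(p_k))^*\otimes H^{d-1}(\OO_{\IP^{d-1}}(-i-(d-1)))^*, \]
which is non-zero because, by hypothesis, $p_k\in\supp(\tau_{k-1}^*S)=\tau_{k-1}\inv(\supp S)$. Therefore $\tilde\EE\cap\lorth F=0$, so $\Db(\tX)_F=\pi^*\Db(X)$ and $\QQ_F=\tilde\EE$.

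Finally, \SOD\ holds for $F$ because $\Db(\tX)_F=\pi^*\Db(X)$ is right admissible in $\Db(\tX)$, the inclusion having right adjoint $\pi_*$ (note $\pi_*\pi^*\cong\id$). The step I expect to be the real obstacle is the vanishing $\tilde\EE\cap\lorth F=0$, and within it the reduction ``$\tilde\EE$ is generated by an exceptional sequence, hence it suffices to test the generators'': this move is taken over verbatim from Proposition~\ref{prop:sod-equality}. Everything else is bookkeeping with semi-orthogonal decompositions, adjunctions and the projection formula; the one point worth checking carefully is that the Serre-duality computation of Proposition~\ref{prop:sod-equality} uses only $p_k\in\supp(\tau_{k-1}^*S)$ and not a Calabi-Yau property of $\tau_{k-1}^*S$, which is exactly what makes the inductive step legitimate even though $\tau_{k-1}^*S$ is in general merely spherelike on $X_{k-1}$.
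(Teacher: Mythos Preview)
Your proof is correct and follows the paper's overall strategy: apply Theorem~\ref{thm:projection_functor} with $\CC=\pi^*\Db(X)$, reduce $\tilde\EE\cap\lorth F=0$ to checking the exceptional generators, and verify non-orthogonality via the Serre-duality computation of Proposition~\ref{prop:sod-equality}. The paper organises this as an induction on $l$, computing $\Hom^\bullet(\OO_{E_l}(-k),F)$ directly on $\tX$ with the full formula $\omega_\tX=\pi^*\omega_X\otimes\bigotimes_i\pi_{l,i+1}^*\OO((d-1)E_i)$ and invoking the Calabi-Yau property of $S$ together with $E_l\cdot\pi_{l,i+1}^*E_i=0$ for $i<l$; the inductive hypothesis disposes of $\pi_l^*\mathcal{E}$. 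Your variant is a bit cleaner and sidesteps the induction: pulling each computation back to $X_k$ via full faithfulness of $\sigma_k^*$ reduces to a single blow-up, and your observation that $\pi_k^*\omega_{X_{k-1}}|_{R_k}\cong\OO_{R_k}$ (since $\pi_k$ contracts $R_k$ to a point) means only $p_k\in\supp(\tau_{k-1}^*S)$ is used --- the Calabi-Yau property of $S$ is never needed. Both arguments rely on the same ``reduce to the exceptional generators'' step you flag; you are right that this is taken over from Proposition~\ref{prop:sod-equality}.
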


\begin{proof}
Let $\pi_i$ be the blow-up of the point $p_i$ with exceptional divisor $E_i$, and 
write $\pi_{l,k} \coloneqq \pi_l \circ \cdots \pi_k$ for $1\leq k\leq l$.
There is the semi-orthogonal decomposition 
\[
\Db(\tX) = \sod{ \OO_{E_l}(-d'),\ldots,\OO_{E_l}(-1), \pi^*_l(\mathcal{E}), \pi^* \Db(X)}
\]
for a category $\mathcal{E}$ generated by certain $\OO_{E_i}(-k)$ with $1\leq i<l$.
Applying Theorem~\ref{thm:projection_functor}, we have to check that
$\sod{ \OO_{E_l}(-d'), \ldots, \OO_{E_l}(-1), \pi^*_l(\mathcal{E})} \cap \lorth F$ is zero. Note that $\Hom^\bullet(\pi^*_l(\mathcal{E}),F)$ does not vanish by induction. So it remains to show that
$\Hom^\bullet(\OO_{E_l}(-k),F)$ is non-zero for $1\leq k\leq d'$.
Since $\omega_\tX = \pi^* \omega_X \otimes \bigotimes_{i=1}^l \pi^*_{l, i+1} \OO(d'{E_{i}}) $
where $\pi_{l,l+1} \coloneqq  \id$, we compute
\begin{align*}
\Hom^\bullet(\OO_{E_l}(-k),F) &= \Hom^\bullet(S,\pi_*(\OO_{E_l}(-k-d')))^*[-d]\\
&= \Hom^\bullet(S,\kk(p) \otimes H^\bullet(\OO_{E_l}(-k-d')))^*[-d] \neq 0,
\end{align*}
using analogous arguments as in the proof above. Additionally, we have used that $E_l.\pi^*_{l, i+1}(E_i) = 0$ for $i<l$ and moreover, $(\pi_{l-1, 1})_* \kk(p_l) = \kk(p)$ for some point $p$ in the support of $S$. Therefore, we are done.
\end{proof}

\begin{remark}
The assumption that the centers of the blow-ups have to be within the support of (the pullback of) $S$ is not a strong restriction.
In fact, blow-ups outside of $S$ can be performed independently. So without loss of generality, we can perform such blow-ups at first, under which the pullback of $S$ stays spherical by Lemma~\ref{lem:blowup-asphericity}.

Consequently, if a spherelike object is a pullback of a spherical one, then we can recover the derived category of the variety where it is spherical.
\end{remark}

We single out a special case of the corollary which already appeared in the introduction. By definition, $X$ is a Calabi-Yau variety if its structure sheaf $\OO_X$ is spherical. 

\begin{proposition} \label{prop:structure-sheaf-spherelike}
Let $X$ be a Calabi-Yau variety and $\tX\to X$ be any succession of blow-ups in points. Then $\OO_\tX$ is a spherelike sheaf, and its spherical subcategory is $\Db(\tX)_{\OO_\tX}=\Db(X)$.
\end{proposition}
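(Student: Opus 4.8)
The plan is to reduce Proposition~\ref{prop:structure-sheaf-spherelike} to the preceding Corollary by applying it with the spherical object $S=\OO_X$. First I would note that $X$ being Calabi--Yau means precisely that $\OO_X$ is a $\dim(X)$-spherical object of $\Db(X)$, which is exactly the hypothesis needed to feed into the Corollary. The structure sheaf is supported everywhere, so in particular the centers of \emph{any} succession of blow-ups in points automatically lie in $\supp(\OO_X)=X$; thus the support hypothesis of the Corollary is vacuously satisfied, and no appeal to the preceding Remark about moving blow-ups outside the support is even necessary here.

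Next I would identify $\pi^*\OO_X$ with $\OO_{\tX}$: for a blow-up (and hence any succession of blow-ups) $\pi\colon\tX\to X$ one has $\pi^*\OO_X=\OO_{\tX}$, since $\pi^*$ of the structure sheaf is the structure sheaf and there is nothing derived to worry about for the pullback of a locally free sheaf. Combining this with the Corollary gives $\Db(\tX)_{\OO_{\tX}}=\Db(\tX)_{F}=\pi^*\Db(X)$. Finally, because $\pi^*$ is fully faithful (as recalled in Section~\ref{sec:blowup}, and inductively so for a composition of blow-ups), the essential image $\pi^*\Db(X)$ is equivalent to $\Db(X)$, yielding the stated identification $\Db(\tX)_{\OO_{\tX}}=\Db(X)$.

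It also remains to observe the first assertion, that $\OO_{\tX}$ is spherelike: this follows from Section~\ref{sec:spherelike-vb}, since $X$ being Calabi--Yau forces $\OO_X$ to be spherical (hence $H^\bullet(\OO_X)=\kk\oplus\kk[-d]$), and the dichotomy there shows $\OO_{\tX}=\pi^*\OO_X$ is then spherelike on $\tX$ as well; alternatively it is immediate from the Corollary, which presupposes $F=\pi^*S$ is spherelike.

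I do not expect any genuine obstacle: the statement is essentially a corollary of the Corollary, and the only points requiring a word are (i) that $\OO_X$ spherical is the definition of Calabi--Yau, (ii) that the support condition is automatic because $\supp(\OO_X)=X$, and (iii) that $\pi^*\OO_X=\OO_{\tX}$. The mildest care is needed in checking that these facts persist through an arbitrary finite \emph{succession} of point blow-ups rather than a single one, but this is exactly what the Corollary already handles, so nothing new is required.
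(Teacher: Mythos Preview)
Your proposal is correct and matches the paper's approach exactly: the paper presents this proposition simply as ``a special case of the corollary'' together with the definition that $X$ is Calabi--Yau precisely when $\OO_X$ is spherical, and you have spelled out the three obvious identifications (Calabi--Yau $\Leftrightarrow$ $\OO_X$ spherical, $\supp(\OO_X)=X$, and $\pi^*\OO_X=\OO_{\tX}$) that make the corollary apply.
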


\begin{example} \label{ex:stdex}
Let $X$ be a surface containing a $-2$-curve $C$, i.e.\ a smooth, rational curve $C$ with $C^2=-2$.
Then $S = \OO_C$ is a spherical object in $\Db(X)$; see \cite[Ex.~8.10(iii)]{Huybrechts}. Let $\pi\colon \tX\to X$ be the blow-up of $X$ in a point on $C$. Then $\pi^*S=\OO_{\pi\inv(C)}$ is a 2-spherelike object.

The total transform $\pi\inv(C)=\tilde{C}+R$ is a reducible curve, having as components the strict transform $\tilde{C}$ of $C$ and the exceptional divisor $R$. We remark that $\tilde{C}+R$ has self-intersection $-2$, as follows from $\tilde{C}^2=-3$ and $R^2=-1$. Let us abusively write $C$ instead of $\tilde{C}$ for the strict transform.

We explicitly compute the asphericity $Q$ of the properly 2-spherelike object $F=\OO_{C+R}$. By Lemma~\ref{lem:blowup-asphericity}, it is given by the (derived) tensor product $Q=\OO_{C+R}\otimes\OO_{R}(R)$. Resolving $\OO_{C+R}$ by $i\colon\OO_\tX(-C-R)\to\OO_\tX$,
\begin{align*} 
 Q &= \OO_{C+R} \otimes \OO_R(R) 
    = \OO_R(R) \oplus \OO_\tX(-C-R)\otimes\OO_R(R)[1] \\
   &= \OO_R(-1) \oplus \OO_R(-C)|_R[1] = \OO_R(-1) \oplus \OO_R(-1)[1],
\end{align*}
where we used $i|_R=0$, giving the direct sum, and  $C.R=1$. We conclude $\sod{Q}=\QQ_F$ --- note that that would be wrong without split closure on $\sod{Q}$.
\end{example}

\subsection{Ruled surfaces}
For a different kind of example consider a ruled surface $\pi\colon X\to C$ where $C$ is a smooth, projective curve of arbitrary genus. There is a section which we denote by $C_0\subset X$. It is a classical fact about ruled surfaces that the direct image $V\coloneqq \pi_*\OO_X(C_0)$ is a vector bundle of rank 2 on $C$ (in particular, all higher direct images vanish) with the property $X=\IP(V)$; see \cite[\S{}V.2]{Hartshorne} or \cite[\S5]{Friedman}.

Since ruled surfaces are special cases of projective bundles, we again get a semi-orthogonal decomposition
 $\Db(X)=\sod{\pi^*\Db(C)\otimes\OO_X(-C_0),\; \pi^*\Db(C)}$, 
see \cite[Cor.~8.36]{Huybrechts}.
Here, $\OO_X(C_0)$ is the relatively ample line bundle $\OO_\pi(1)$. This is another situation in which Theorem~\ref{thm:projection_functor} applies.

Given a spherical object $S\in\Db(C)$, its pullback $F\coloneqq \pi^*S$ is 1-spherelike in $\Db(X)$. We know
 $\DD_F = \sod{(\pi^*\Db(C)\otimes\OO_X(-C_0)) \cap \lorth F,\; \pi^*\Db(C)}$
from the theorem. In order to determine the left-hand intersection, take an object $B\coloneqq \pi^*A\otimes\OO_X(-C_0)$ with $A\in\Db(C)$ and carry on with
\begin{align*}
 \Hom^\bullet_X(B,F) &= \Hom^\bullet_X(\pi^*A\otimes\OO_X(-C_0),\pi^*S) \\
                    &= \Hom^\bullet_C(A,S\otimes\pi_*(\OO_X(C_0))) 
                     = \Hom^\bullet_C(A,S\otimes V) .
\end{align*}
We conclude
 $\DD_F = \bigsod{ \pi^*(\lorth(S\otimes V))\otimes\OO_X(-C_0),\; \pi^*\Db(C) }$.

It is well known that $\Db(C)$ has no non-trivial semi-orthogonal decompositions unless $C=\IP^1$, and then assumption \SOD\ cannot be met.

\begin{example} \label{ex:pulledback-ruled}
The skyscraper sheaf $S\coloneqq \kk(p)$ is spherical in $\Db(C)$ for any point $p\in C$. Then $F=\OO_P$ where $P\coloneqq \pi\inv(p)\cong\IP^1$ is the structure sheaf of the fibre over $p$. Here, $S\otimes V=\kk(p)^2$ regardless of the actual surface. We claim that $\lorth\kk(p)=\Db_U(C)$, the  subcategory of objects of $\Db(C)$ supported on the open set $U\coloneqq C\backslash\{p\}$. (This claim follows from standard facts: as $C$ is a smooth curve, every object of $\Db(C)$ is isomorphic to its cohomology complex; every sheaf is a direct sum of its torsion sheaf and the torsion-free quotient, the latter always mapping to any skyscraper sheaf.) Altogether
\[ \DD_{\OO_P} = \bigsod{\pi^*\Db_U(C)\otimes\OO_X(-C_0),\; \pi^*\Db(C)} . \]
We point out that $\pi^*\Db_U(C)\otimes\OO_X(-C_0)$ is generated by $\OO_{\pi\inv(c)}(-1)$ for all $c\in U$, i.e.\ the $(-1)$-twisted structure sheaves of all fibres except $\pi\inv(p)$.

As to the asphericity: $\omega_X\cong\OO_X(-2C_0)\otimes\pi^* L$ for some line bundle $L\in\Pic(C)$. Hence
 $F\otimes\omega_X\cong\pi^*(\kk(p)\otimes L)\otimes\OO_X(-2C_0) = \OO_X(-2C_0)|_P=\OO_P(-2)$.
The triangle defining $Q$ therefore is
 $\OO_P \to \OO_P(-2)[1] \to Q$
so that $Q\cong\OO_P(-1)^2[1]$ --- this is just the Euler sequence for $P\cong\IP^1$.

Hence, assumption \SOD\ is not fulfilled: $\QQ_F=\sod{\pi^*\kk(p)\otimes\OO_X(-C_0)}$ and $\DD_F$ do not generate $\Db(X)$ because $\sod{\kk(p)}$ and $\Db_U(C)$ do not generate $\Db(C)$.

This example shows that $\Db(X)$ can contain infinitely many pairwise incomparable spherical subcategories. See \cite[\S2]{HKP:RT} for a further study of this and related questions.
\end{example}

\begin{example} \label{ex:structure-sheaf-ruled-surface}
Now consider the special case of a ruled surface of genus 1, i.e.\ $C$ is an elliptic curve. Then the structure sheaf $\OO_C$ is 1-spherical in $\Db(C)$, hence its pull-back $\pi^*\OO_C=\OO_X$ is 1-spherelike in $\Db(X)$. By the above general computation, the spherical subcategory is
\[ \DD_{\OO_X} = \bigsod{\pi^*(\lorth V)\otimes\OO_X(-C_0),\; \pi^*\Db(C)} . \]
However, the orthogonal category $\lorth V\subset\Db(C)$ depends on the geometry, i.e.\ the choice of $V$. It is well known that for ruled surfaces over elliptic curves, only three possibilities for $V$ can occur, up to line bundle twists which don't affect $\IP(V)$; see \cite[\S5]{Friedman} or \cite[Thm.~V.2.15]{Hartshorne}:
\begin{itemize}
\item $V=\OO_C\oplus L$ with $L\in\Pic(C)$ of non-negative degree;
\item $V$ is a non-trivial extension of $\OO_C$ by $\OO_C$;
\item $V$ is a non-trivial extension of $\OO_C(p)$ by $\OO_C$ for a point $p\in C$.
\end{itemize}
For example, if $V=\OO_C\oplus L$ with $L\in\Pic^0(C)$, then $\lorth V$ contains all line bundles of degree 0 different from $\OO_C$ and $L$. In particular, the complement is smaller for $V=\OO_C\oplus L$ with $L\neq\OO_C$ than for $V=\OO_C\oplus\OO_C$.
\end{example}

\appendix
\section{0-spherelike objects} \label{app:0-spherelike}

\noindent
Let $\DD$ be a category as in Section~\ref{sec:twist-functors} and let $F\in\DD$ be a $d$-spherelike object. Ignoring the grading, the endomorphism algebra $\Hom^\bullet(F,F) \cong \kk^2$ as a $\kk$-vector space.
As an ungraded $\kk$-algebra, only two cases can occur, since $\kk$ is algebraically closed. We call $F$
\begin{itemize}
\item \emph{nilpotent} if $\Hom^\bullet(F,F)=\kk[\varepsilon]/\varepsilon^2$ where $\varepsilon\colon F \to F[d]$ is unique up to scalars;
\item \emph{disconnected} if $\Hom^\bullet(F,F)=\kk\times\kk$, so that $\id_F = p_1 + p_2$ for two orthogonal idempotents which are unique up to order.
\end{itemize}
To see that these are all cases, let $\{1,b\}$ be a basis of $\Hom^\bullet(F,F)$ and consider the surjection $\kk[x] \onto \Hom^\bullet(F,F)$, $x\mapsto b$. Its kernel is generated by a polynomial of degree $2$ having either just one or two distinct roots, which separates both cases.
We note that the second case can only occur for $d=0$. We want to mention that given two exceptional, mutually orthogonal objects $E$ and $E'$, their direct sum $E\oplus E'$ is a disconnected $0$-spherelike object. If $\DD$ is idempotent complete (for example, if it is a derived category \cite[Cor.~2.10]{Balmer-Schlichting}), then every disconnected spherelike object is of the form above.

\apppart{Proof of Lemma~\ref{lem:spanningprop}, (5)}
If $F$ is nilpotent then $F$ is indecomposable, so the original argument applies. On the other hand, if $F$ is disconnected, i.e.\ $\Hom^\bullet(F,F) \cong \kk\times\kk$ and consequently $d=0$, then $F\cong E_1 \oplus E_2$ for two non-zero objects $E_1$ and $E_2$. With $F$ 0-spherelike, both objects $E_1$ and $E_2$ have to be exceptional and mutually orthogonal. In particular, they are indecomposable, so $F \oplus F \cong \SSS F \oplus \SSS F$ implies $F \cong \SSS F$, using that $\DD$ is Krull-Schmidt. Hence, the claim is established.

\apppart{Construction of $w\colon F\to \omega(F)=\SSS F$ and proof of Lemma~\ref{lem:Hom(F,Q)=0}}
First, we will treat the case of $F$ being nilpotent, so $\End(F)$ has the basis $(\id,\varepsilon)$ as a $\kk$-vector space with $\varepsilon^2=0$.
Let $(\id\dual,\varepsilon\dual)$ be the dual basis of $\End(F)^*$. 
There is a natural structure of $\End(F)^*$ as a right $\End(F)$-module, given by $\varepsilon\dual \cdot \varepsilon = \id\dual$ and $\id\dual \cdot \varepsilon = 0$.
Since $\sigma\colon \Hom(F,\SSS F) \xrightarrow{\sim} \End(F)^*$ is functorial, $\sigma$ is an isomorphism of $\End(F)$-modules.
Hence, there is a basis $(\iota,\phi)$ of $\Hom(F,\SSS F)$ with $\iota \circ\varepsilon = 0$ and $\phi \circ\varepsilon = \iota$.
We can choose $w = a \iota + b \phi$ with $a,b$ arbitrary, as long as $b \neq 0$. 

Using the basis of $\End(F)$ we see that $w_* \colon \End(F) \to \Hom(F,\SSS F)$ is an isomorphism, which proves Lemma~\ref{lem:Hom(F,Q)=0} in this case.

\smallskip
Next, we turn to the case that $F = E_1 \oplus E_2$ is disconnected, so $E_i$ are mutually orthogonal exceptional objects.
By Serre duality we get that
\[
\Hom^\bullet(E_i,\SSS E_j) = 
\begin{cases}
\kk \cdot s_i & \text{if } i=j\\
0 & \text{if } i\neq j
\end{cases}
\]
Therefore any map $F \to \SSS F$ is of the form $a_1 s_1 + a_2 s_2$.
Now choose $w = a_1 s_1 + a_2 s_2$ with $a_i \neq0$.

Again, $w_*\colon \End(F) \to \Hom(F,\SSS F)$ turns out to be an isomorphism, so also in the disconnected case,  Lemma~\ref{lem:Hom(F,Q)=0} is shown.
We note that if one would choose $a_i=0$, then $E_i$ is a direct summand of $Q_F$.

\smallskip
Finally, we want to remark, that in both cases, different choices of $w$ yield isomorphic $Q_F$.
We show in the nilpotent case, that the asphericity $Q_F$ of $w= a\iota + b\phi$ is isomorphic to $Q'_F$ of $w'=\phi$; the disconnected case is similar.
For the corresponding aspherical triangles, we get the following map of triangles, where the right map is the isomorphism between the asphericities:
\[
\xymatrix@R=3ex{
F \ar[r]^{w'} \ar[d]_{\frac1{b}\id-\frac{a}{b^2}\varepsilon} & \SSS F \ar[r] \ar[d]^{\id} & Q'_F \ar@{..>}[d]\\
F \ar[r]_{w}       & \SSS F \ar[r]             & Q_F
}
\]

\addtocontents{toc}{\protect\setcounter{tocdepth}{-1}}
\section*{Acknowledgements}

\noindent
Martin Kalck is grateful for the support by the DFG grant Bu-1886/2-1. The authors would like to thank Stephen Coughlan, Sergey Galkin, Matthias Sch\"utt, Pawel Sosna, Greg Stevenson, Rahbar Virk and Michael Wemyss for answering our questions. We are very grateful to the anonymous referees for very thorough reading and many valuable comments.

\bigskip
\noindent
\resizebox{\textwidth}{!}{\emph{Contact:} \texttt{ahochene@math.uni-koeln.de, m.kalck@ed.ac.uk, ploog@math.uni-hannover.de}}

\end{document}